\documentclass[3p]{elsarticle}
\usepackage{amsmath,amsthm,,amsfonts,verbatim}
\newcommand{\norm}[1]{\left\lVert#1\right\rVert}
\newtheorem{theorem}{Theorem}
\newtheorem{lemma}{Lemma}

\newtheorem{remark}{Remark}
\usepackage[ruled]{algorithm2e}
\usepackage{algorithmic}
\usepackage{amsthm}
\usepackage{tabularx}
\usepackage{array}
\usepackage[figuresright]{rotating}
\usepackage{amssymb,verbatim}
\usepackage{booktabs}

\begin{document}

\begin{frontmatter}
\title{Nonlinear Convex Optimization: From Relaxed Proximal Point Algorithm to Prediction Correction Method}


\author{ Sai Wang, Yi Gong\footnote{\noindent Corresponding author.\\ \indent \quad E-mail address: gongy@sustech.edu.cn (Y. Gong).}}

\address{ Department of Electrical and Electronic Engineering, Southern University of Science and Technology, Shenzhen, China}

\begin{abstract}
Nonlinear convex problems arise in various areas of applied mathematics and engineering. Classical techniques such as the relaxed proximal point algorithm (PPA) and the prediction correction (PC) method were proposed for linearly constrained convex problems. However, these methods have not been investigated for nonlinear constraints. In this paper, we customize the varying proximal matrix to develop the relaxed PPA for nonlinear convex problems. We also extend the PC method to nonlinear convex problems. As both methods are an extension of the PPA-based contraction method, their sequence convergence can be directly established. Moreover, we theoretically demonstrate that both methods can achieve a convergence rate of $O(1/t)$. Numerical results once again support the theoretical analysis.

\end{abstract}

\begin{keyword}
 Nonlinear convex optimization, Proximal point algorithm, Prediction correction method. 
\end{keyword}
\end{frontmatter}

\section{Introduction} \label{section1}

The proximal point algorithm (PPA) plays a fundamental role in the area of convex programming. The first PPA was proposed by Martinet \cite{A1} for convex problems and extended to include metric proximal regularization  \cite{A2,A3,A4,R2}. A linearly inequality-constrained convex problem is written as
\begin{equation}
\min_{\boldsymbol{x}}\left\{ f(\boldsymbol{x})\mid \boldsymbol{A}\boldsymbol{x}\leq\boldsymbol{b}, \boldsymbol{x}\in \mathcal{X}\right\},\label{eq1a}
\end{equation}
where $ f(\boldsymbol{x}): \mathbb{R}^{n}\rightarrow \mathbb{R}$ is a closed convex function (not necessarily smooth), $\boldsymbol{A}\in \mathbb{R}^{m\times n}$, $\boldsymbol{b}\in \mathbb{R}^{m}$ and $\mathcal{X}\subset \mathbb{R}^{n}$ is a closed convex set. By attaching a Lagrange multiplier $\boldsymbol{\lambda}\in \mathbb{R}^{m}$ to the linear constraint, the Lagrange function of (\ref{eq1a}) is
\begin{equation}
  \mathcal{L}'(\boldsymbol{x},\boldsymbol{\lambda})= f(\boldsymbol{x})+\boldsymbol{\lambda}^{T}(\boldsymbol{A}\boldsymbol{x}-\boldsymbol{b}).\ \label{eq1b}
\end{equation}
The primal-dual hybrid gradient (PDHG) algorithm was proposed for saddle point problems in \cite{PD}. Since the Lagrange function (\ref{eq1b}) can be viewed as a saddle point problem, it is natural to employ the PDHG algorithm to solve it. The iterative steps of the algorithm are as follows.
\begin{equation} \left\{ \begin{array}{lrl}
\boldsymbol{x}^{k+1}=\arg\min\{\mathcal{L}'(\boldsymbol{x},\boldsymbol{\lambda}^{k})+\frac{r}{2}\norm{\boldsymbol{x}-\boldsymbol{x}^{k}}^{2}\mid \boldsymbol{x}\in \mathcal{X}\},\\
\boldsymbol{\lambda}^{k+1}=\arg\max\{\mathcal{L}'(\boldsymbol{x}^{k+1},\boldsymbol{\lambda})-\frac{s}{2}\Vert\boldsymbol{\lambda}-\boldsymbol{\lambda}^{k}\Vert^{2}\}.
\end{array}\right.\label{eq1c}
\end{equation}
where $r>0$ and $s>0$ are two regularization factors. Unfortunately, it may fail to identify the optimal solution of linear programming due to the unbalanced alternating iteration \cite{E4}. The customized PPA has been proposed as a remedy to overcome this situation \cite{E0}. By customizing the proximal matrix (symmetric and positive-definite), its convergence can be obtained directly. Specifically, the iterative scheme of the customized PPA for solving (\ref{eq1a}) is given as follows.
\begin{equation} \left\{ \begin{array}{lrl}
\boldsymbol{x}^{k+1}=\arg\min\{\mathcal{L}'(\boldsymbol{x},\boldsymbol{\lambda}^{k})+\frac{r}{2}\norm{\boldsymbol{x}-\boldsymbol{x}^{k}}^{2}\mid \boldsymbol{x}\in \mathcal{X}\},\\
\boldsymbol{\lambda}^{k+1}=\arg\max\{\mathcal{L}'(2\boldsymbol{x}^{k+1}-\boldsymbol{x}^{k},\boldsymbol{\lambda})-\frac{s}{2}\Vert\boldsymbol{\lambda}-\boldsymbol{\lambda}^{k}\Vert^{2}\mid \boldsymbol{\lambda}\in \mathbb{R}^{m}_{+}\}.
\end{array}\right.\label{eq1d}
\end{equation}
 Over the past few decades, the customized PPA for linearly constrained convex optimization has been studied by many researchers. For instance, the authors in \cite{E1} introduced a parameter to customize the proximal matrix for solving $\ell_{1}$-minimization problems arising from compressed sensing. A relaxed customized PPA was developed for linearly constrained separable convex programming problem \cite{E2}. In \cite{E4}, a simple convergence analysis of PPA-like contraction methods is established by leveraging the variational inequality framework. Additionally, the authors in \cite{E1,E4} the PPA-based prediction correction (PC) method was proposed. Unlike the customized PPA, the PC method generates predictive variables using PPA and corrects them using a correction matrix.  Convergence analysis in \cite{E4} shows that the PC method is convergent for linear constraints. In \cite{E5}, an alternative extrapolation scheme of PDHG was proposed for solving nonlinear saddle-point problems. Furthermore, the authors in \cite{E7} proposed an inexact customized PPA framework for two-block separable convex optimization problems based on the error criterion. Without the relaxation step, a new customized PPA was proposed for linearly constrained convex problems in \cite{E3}.  

However, the aforementioned work only focuses on convex problems with linear constraints, and the PPA-based unified framework has not been extended to nonlinearly constrained situations.  In this paper, we aim to address a more general scenario by considering a nonlinear convex optimization problem with inequality constraints:
\begin{equation}
\begin{aligned}
 \min\left\{ f(\boldsymbol{x}) \mid   \phi_{i}(\boldsymbol{x}) \leq 0, \ \boldsymbol{x}\in \mathcal{X}, \ i=1,\cdots,m\right\},
\end{aligned}\label{eq2}
\end{equation}
where $ f(\boldsymbol{x}): \mathbb{R}^{n}\rightarrow \mathbb{R}$ and $\{ \phi_{i}(\boldsymbol{x}): \mathbb{R}^{n}\rightarrow \mathbb{R}, \forall i\}$ are convex functions and $\{ \phi_{i}(\boldsymbol{x}), \forall i\}$ are continuously differentiable. $\mathcal{X}$ is a nonempty closed convex set in $\mathbb{R}^{n}$. Note that $ f(\boldsymbol{x})$ is not necessarily differentiable. The Lagrangian function of problem (\ref{eq2}) is written as 
\begin{equation}
  \mathcal{L}''(\boldsymbol{x},\boldsymbol{\lambda})= f(\boldsymbol{x})+\boldsymbol{\lambda}^{T}  \Phi (\boldsymbol{x}),\ \label{eq19}
\end{equation}
where $\Phi(\boldsymbol{x})=[\phi_{1}(\boldsymbol{x}),\cdots,\phi_{m}(\boldsymbol{x})]$ and the dual variable $\boldsymbol{\lambda}\in \mathbb{R}^{m\times 1}$ is defined in  $\mathcal{Z}: \mathbb{R}^{m}_{+}$. 
A pair of $(\boldsymbol{x}^{*},\boldsymbol{\lambda}^{*})$ is called the saddle point of Lagrangian function (\ref{eq19}), i.e.,  
\begin{equation}
 \left\{ \begin{array}{rrl}
     \boldsymbol{x}^{*}\in\arg\min\{\mathcal{L}''(\boldsymbol{x},\boldsymbol{\lambda}^{*})\mid \boldsymbol{x}\in \mathcal{X}\},\\ 
     \boldsymbol{\lambda}^{*}\in\arg\max\{\mathcal{L}''(\boldsymbol{x}^{*},\boldsymbol{\lambda})\mid \boldsymbol{\lambda}\in  \mathcal{Z}\}.
        \end{array}\right.\label{eq20}
\end{equation}
They satisfy the inequality: $\mathcal{L}''_{\boldsymbol{\lambda}\in \mathcal{Z}}(\boldsymbol{x}^{*},\boldsymbol{\lambda})\leq\mathcal{L}''(\boldsymbol{x}^{*},\boldsymbol{\lambda}^{*})\leq\mathcal{L}''_{\boldsymbol{x}\in \mathcal{X}}(\boldsymbol{x},\boldsymbol{\lambda}^{*}).$ In this paper, the saddle point of  Lagrangian function (\ref{eq19})  is assumed to always exist.

The rest of this paper is organized as follows. Section \ref{section2} summarizes some preliminaries for further analysis. In Section \ref{section3} and Section \ref{section4}, we design the relaxed PPA and the PC method, respectively. In Section \ref{section5}, some numerical experiments are reported. Finally, we conclude this paper in Section \ref{section6}.

\section{Preliminaries}\label{section2}
In this section, we introduce some preliminaries that are useful for further analysis. In particular, we review some basic knowledge of variational inequality. The derivative of a scalar $f(\boldsymbol{x})$ by a vector $\boldsymbol{x}=[x_{1},\cdots,x_{n}]^{T}$, is written (in numerator layout notation) as
\begin{equation}
\nabla f(\boldsymbol{x})=\left( \begin{array}{c} \frac{\partial f}{\partial x_{1}} \\ \vdots\\ \frac{\partial f}{\partial x_{n}}\end{array} \right) \mbox{ and let } \mathcal{D}f(\boldsymbol{x})=[\nabla f(\boldsymbol{x})]^{T}.
\nonumber
\end{equation}
The derivative of a vector function (a vector whose components are functions) $\Phi(\boldsymbol{x})=[\phi_{1}(\boldsymbol{x}),\cdots,\phi_{m}(\boldsymbol{x})]^{T}$, with respect to an input vector, $\boldsymbol{x}=[x_{1},\cdots,x_{n}]^{T}$, is written (in numerator layout notation) as
\begin{equation}
\mathcal{D} \Phi(\boldsymbol{x})=\left( \begin{array}{c} \mathcal{D}\phi_{1}(\boldsymbol{x}) \\ \vdots\\ \mathcal{D}\phi_{m}(\boldsymbol{x})\end{array} \right)=\left( \begin{array}{ccc} \frac{\partial \phi_{1}}{\partial x_{1}} &\cdots&\frac{\partial \phi_{1}}{\partial x_{n}} \\ \vdots&\ddots&\vdots\\ \frac{\partial \phi_{m}}{\partial x_{1}}  &\cdots&\frac{\partial \phi_{m}}{\partial x_{n}} \end{array} \right)\nonumber
\end{equation}

\subsection{Differential Convex Optimization}
\begin{lemma}\label{lemma1}
Let $\mathcal{X}\subset \mathbb{R}^{n}$ be a closed convex set and the parameter $ \lambda \ge 0$ hold. Let $ f(\boldsymbol{x})$ and $  g(\boldsymbol{x})$ be convex functions and $g(\boldsymbol{x})$ be differentiable. Assume that the solution set of the minimization problem $\min\{ f(\boldsymbol{x})+ \lambda g(\boldsymbol{x})\mid \boldsymbol{x}\in \mathcal{X}\}$ is nonempty. Then the vector $\boldsymbol{x}^{*}$ is an optimal solutions, i.e., 
\begin{equation}
\boldsymbol{x}^{*}\in \arg \min \{ f(\boldsymbol{x})+ \lambda  g(\boldsymbol{x})\mid\boldsymbol{x}\in \mathcal{X}\}, \label{eq3}
\end{equation}
if and only if the following variational inequality holds
\begin{equation}
\boldsymbol{x}^{*}\in \mathcal{X}, \quad f(\boldsymbol{x})- f(\boldsymbol{x}^{*})+ \lambda (\boldsymbol{x}-\boldsymbol{x}^{*})^{T}\nabla  g(\boldsymbol{x}^{*})\ge0,\quad\forall \ \boldsymbol{x}\in \mathcal{X}.\label{eq4}
\end{equation}
\end{lemma}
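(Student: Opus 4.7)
The plan is to prove the two directions separately, exploiting the fact that $f$ appears only through function values (so no subgradient of $f$ needs to be written down) while $g$ contributes via its gradient because it is differentiable.

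For the forward implication, I would start from the assumption that $\boldsymbol{x}^{*}$ is a minimizer and perturb it along a feasible direction. Fix an arbitrary $\boldsymbol{x}\in\mathcal{X}$ and $\alpha\in(0,1]$; since $\mathcal{X}$ is convex, $\boldsymbol{x}^{*}+\alpha(\boldsymbol{x}-\boldsymbol{x}^{*})\in\mathcal{X}$. Optimality gives
\begin{equation}
f(\boldsymbol{x}^{*}+\alpha(\boldsymbol{x}-\boldsymbol{x}^{*}))+\lambda g(\boldsymbol{x}^{*}+\alpha(\boldsymbol{x}-\boldsymbol{x}^{*}))\ge f(\boldsymbol{x}^{*})+\lambda g(\boldsymbol{x}^{*}).\nonumber
\end{equation}
Next I would use convexity of $f$ to bound $f(\boldsymbol{x}^{*}+\alpha(\boldsymbol{x}-\boldsymbol{x}^{*}))-f(\boldsymbol{x}^{*})\le\alpha(f(\boldsymbol{x})-f(\boldsymbol{x}^{*}))$, and differentiability of $g$ to write $g(\boldsymbol{x}^{*}+\alpha(\boldsymbol{x}-\boldsymbol{x}^{*}))-g(\boldsymbol{x}^{*})=\alpha\nabla g(\boldsymbol{x}^{*})^{T}(\boldsymbol{x}-\boldsymbol{x}^{*})+o(\alpha)$. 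Substituting, dividing by $\alpha>0$, and letting $\alpha\to 0^{+}$ yields the variational inequality (\ref{eq4}).

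For the reverse implication, I would invoke the gradient inequality for the convex differentiable function $g$, namely $g(\boldsymbol{x})\ge g(\boldsymbol{x}^{*})+\nabla g(\boldsymbol{x}^{*})^{T}(\boldsymbol{x}-\boldsymbol{x}^{*})$. Multiplying by $\lambda\ge 0$ and adding to (\ref{eq4}) cancels the gradient term and gives
\begin{equation}
f(\boldsymbol{x})+\lambda g(\boldsymbol{x})\ge f(\boldsymbol{x}^{*})+\lambda g(\boldsymbol{x}^{*}),\quad\forall\,\boldsymbol{x}\in\mathcal{X},\nonumber
\end{equation}
which is exactly optimality of $\boldsymbol{x}^{*}$.

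The only delicate point is the forward direction, where one must avoid invoking subgradients of $f$ (which the excerpt has not set up) while still handling its nonsmoothness. The trick is that the convexity bound $f(\boldsymbol{x}^{*}+\alpha(\boldsymbol{x}-\boldsymbol{x}^{*}))\le(1-\alpha)f(\boldsymbol{x}^{*})+\alpha f(\boldsymbol{x})$ already linearizes the difference quotient, so passing to the limit $\alpha\to 0^{+}$ is legitimate without any smoothness assumption on $f$. Everything else is a direct calculation, and the condition $\lambda\ge 0$ is used only in the reverse implication to preserve the direction of the inequality after multiplication.
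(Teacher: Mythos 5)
Your proposal is correct and follows essentially the same route as the paper: the forward direction perturbs along $\boldsymbol{x}_{\alpha}=(1-\alpha)\boldsymbol{x}^{*}+\alpha\boldsymbol{x}$, linearizes $f$ by convexity and $g$ by differentiability before letting $\alpha\to 0^{+}$, and the reverse direction adds the gradient inequality $\lambda[g(\boldsymbol{x})-g(\boldsymbol{x}^{*})]\ge\lambda(\boldsymbol{x}-\boldsymbol{x}^{*})^{T}\nabla g(\boldsymbol{x}^{*})$ to the variational inequality. The only cosmetic difference is that the paper derives this gradient inequality from the same difference-quotient limit rather than quoting it directly.
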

\begin{proof}
Sufficiency: If formula (\ref{eq3}) holds, for any $\boldsymbol{x}\in \mathcal{X}$, we have 
\begin{equation}
\frac{ f(\boldsymbol{x}_{\alpha})- f(\boldsymbol{x}^{*})}{\alpha}+ \lambda\frac{  g(\boldsymbol{x}_{\alpha})-  g(\boldsymbol{x}^{*})}{\alpha}\ge0,\label{eq5}
\end{equation}
where $\boldsymbol{x}_{\alpha}=(1-\alpha)\boldsymbol{x}^{*}+\alpha\boldsymbol{x}, \forall \ \alpha \in (0,1]$. Since $ f(\boldsymbol{x})$ is convex, it follows that 
\begin{equation}
\begin{aligned}
 f(\boldsymbol{x}_{\alpha})\leq& (1-\alpha) f(\boldsymbol{x}^{*})+\alpha f(\boldsymbol{x})\label{eq6 }
\end{aligned}
\end{equation}
and thus 
\begin{equation}
 f(\boldsymbol{x})- f(\boldsymbol{x}^{*})\ge\frac{ f(\boldsymbol{x}_{\alpha})- f(\boldsymbol{x}^{*})}{\alpha}.\label{eq7}
\end{equation}
Adding (\ref{eq5}) to (\ref{eq7}), we get 
\begin{equation}
 f(\boldsymbol{x})- f(\boldsymbol{x}^{*})+ \lambda \frac{  g(\boldsymbol{x}_{\alpha})-  g(\boldsymbol{x}^{*})}{\alpha}\ge0.\label{eq8 }
\end{equation}
Taking $\alpha\to 0_{+}$, we have 
\begin{equation}
\begin{aligned}
 f(\boldsymbol{x})- f(\boldsymbol{x}^{*})+ \lambda (\boldsymbol{x}-\boldsymbol{x}^{*})^{T}\nabla  g(\boldsymbol{x}^{*})\ge0, \quad \forall \ \boldsymbol{x}\in \mathcal{X}.\label{eq9 }
\end{aligned}
\end{equation}
Necessity: Since $  g(\boldsymbol{x})$ is convex and $\lambda\ge0$, it follows that 
\begin{equation}
  g(\boldsymbol{x})-  g(\boldsymbol{x}^{*})\ge\frac{  g(\boldsymbol{x}_{\alpha})-  g(\boldsymbol{x}^{*})}{\alpha}.\label{eq10}
\end{equation}
Taking $\alpha\to 0_{+}$, we obtain 
\begin{equation}
  \lambda[g(\boldsymbol{x})-  g(\boldsymbol{x}^{*})]\ge \lambda(\boldsymbol{x}-\boldsymbol{x}^{*})^{T}\nabla  g(\boldsymbol{x}^{*}).\label{eq11}
\end{equation}
When inequality (\ref{eq4}) holds, by adding (\ref{eq11}) to (\ref{eq4}), we have
\begin{equation}
 f(\boldsymbol{x})- f(\boldsymbol{x}^{*})+ \lambda [ g(\boldsymbol{x})-  g(\boldsymbol{x}^{*})]\ge0, \quad\forall \ \boldsymbol{x}\in \mathcal{X}. \label{eq12}
\end{equation}
Thus, this lemma holds.
\end{proof}
\begin{remark}
By Lemma \ref{lemma1}, one of the convex functions is proper and differentiable. Another one can be convex but not necessarily differentiable. 
\end{remark}

\subsection{Monotone Operator}
\begin{lemma}
Let  $\mathcal{X}\subset \mathbb{R}^{n}$ and $ \mathcal{Z} \subset \mathbb{R}^{m}_{+}$ be two closed convex sets. The vector variables $\boldsymbol{x}\in \mathcal{X}$ and $\boldsymbol{\lambda}\in  \mathcal{Z}$ form the new one $\boldsymbol{w}=[\boldsymbol{x}^{T}, \boldsymbol{\lambda}^{T}]^{T}\in\mathbb{R}^{(n+m)\times 1}$. 
For the given $m$ convex problems $\phi_{1}(\boldsymbol{x}),\cdots, \phi_{m}(\boldsymbol{x})$, they form the vector function $  \Phi (\boldsymbol{x})=[ \phi_{1},\cdots, \phi_{m}]^{T}\in\mathbb{R}^{m\times 1}$. The monotone operator $\boldsymbol{\Gamma}(\boldsymbol{w})=[\boldsymbol{\lambda}^{T}\mathcal{D}  \Phi (\boldsymbol{x}), -  \Phi (\boldsymbol{x})^{T}]^{T}\in\mathbb{R}^{(n+m)\times 1}$ satisfies
\begin{equation}
(\boldsymbol{w}-\tilde{\boldsymbol{w}})^{T}[\boldsymbol{\Gamma}(\boldsymbol{w})-\boldsymbol{\Gamma}(\tilde{\boldsymbol{w}})]\ge0, \quad\forall \ \boldsymbol{w}, \tilde{\boldsymbol{w}}\in \mathcal{X}\times \mathcal{Z}.\label{eq13}
\end{equation}\label{lemma2}
\end{lemma}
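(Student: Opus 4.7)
The plan is to expand the scalar product componentwise and reduce monotonicity to the convexity inequalities for each $\phi_i$, leveraging the fact that both $\boldsymbol{\lambda}$ and $\tilde{\boldsymbol{\lambda}}$ live in $\mathbb{R}^m_+$.

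First I would rewrite the bilinear form block by block. Writing $\boldsymbol{w}=[\boldsymbol{x}^T,\boldsymbol{\lambda}^T]^T$ and $\tilde{\boldsymbol{w}}=[\tilde{\boldsymbol{x}}^T,\tilde{\boldsymbol{\lambda}}^T]^T$, and using $[\boldsymbol{\lambda}^T\mathcal{D}\Phi(\boldsymbol{x})]^T=\sum_i\lambda_i\nabla\phi_i(\boldsymbol{x})$, the quantity $(\boldsymbol{w}-\tilde{\boldsymbol{w}})^T[\boldsymbol{\Gamma}(\boldsymbol{w})-\boldsymbol{\Gamma}(\tilde{\boldsymbol{w}})]$ becomes a sum over $i$ of the scalar
\[
T_i := \lambda_i(\boldsymbol{x}-\tilde{\boldsymbol{x}})^T\nabla\phi_i(\boldsymbol{x}) - \tilde{\lambda}_i(\boldsymbol{x}-\tilde{\boldsymbol{x}})^T\nabla\phi_i(\tilde{\boldsymbol{x}}) - (\lambda_i-\tilde{\lambda}_i)\bigl[\phi_i(\boldsymbol{x})-\phi_i(\tilde{\boldsymbol{x}})\bigr].
\]
So it suffices to show $T_i\ge 0$ for every $i$.

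Next I would use convexity of $\phi_i$ twice, at the pair $(\boldsymbol{x},\tilde{\boldsymbol{x}})$ in both directions: the gradient inequality gives $(\boldsymbol{x}-\tilde{\boldsymbol{x}})^T\nabla\phi_i(\boldsymbol{x})\ge\phi_i(\boldsymbol{x})-\phi_i(\tilde{\boldsymbol{x}})$ and $(\boldsymbol{x}-\tilde{\boldsymbol{x}})^T\nabla\phi_i(\tilde{\boldsymbol{x}})\le\phi_i(\boldsymbol{x})-\phi_i(\tilde{\boldsymbol{x}})$. Multiplying the first by the nonnegative scalar $\lambda_i$ and the second by the nonpositive scalar $-\tilde{\lambda}_i$ (both directions preserved because $\lambda_i,\tilde{\lambda}_i\ge 0$) and adding yields
\[
\lambda_i(\boldsymbol{x}-\tilde{\boldsymbol{x}})^T\nabla\phi_i(\boldsymbol{x})-\tilde{\lambda}_i(\boldsymbol{x}-\tilde{\boldsymbol{x}})^T\nabla\phi_i(\tilde{\boldsymbol{x}})\ge(\lambda_i-\tilde{\lambda}_i)\bigl[\phi_i(\boldsymbol{x})-\phi_i(\tilde{\boldsymbol{x}})\bigr],
\]
which is exactly $T_i\ge 0$. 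Summing over $i=1,\dots,m$ produces the desired inequality (\ref{eq13}).

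The main subtlety I expect is handling the mixed cross term involving $(\lambda_i-\tilde{\lambda}_i)$ and $\nabla\phi_i$, since the difference $\lambda_i-\tilde{\lambda}_i$ has no fixed sign: one cannot simply invoke monotonicity of the individual gradient operator $\nabla\phi_i$ with a single weighted combination. The trick is to avoid splitting $\sum_i[\lambda_i\nabla\phi_i(\boldsymbol{x})-\tilde{\lambda}_i\nabla\phi_i(\tilde{\boldsymbol{x}})]$ into a "gradient monotonicity" piece plus a "multiplier" piece; instead one keeps each index coupled and uses the two convexity inequalities with the correct sign of $\lambda_i$ versus $\tilde{\lambda}_i$, so that all coefficients remain nonnegative. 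Once this is arranged, the proof reduces to a one-line summation.
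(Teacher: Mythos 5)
Your proof is correct and follows essentially the same route as the paper: your quantity $T_i$ is exactly the componentwise form of the paper's decomposition $\boldsymbol{\lambda}^{T}\boldsymbol{\Theta}(\tilde{\boldsymbol{x}})+\tilde{\boldsymbol{\lambda}}^{T}\boldsymbol{\Theta}(\boldsymbol{x})$, i.e.\ a sum of nonnegative multipliers times the convexity gaps $\phi_i(\boldsymbol{y})-\phi_i(\boldsymbol{z})-\nabla\phi_i(\boldsymbol{z})^{T}(\boldsymbol{y}-\boldsymbol{z})\ge0$. The one place you improve on the paper is in justifying that gap's nonnegativity via the first-order gradient inequality for convex functions, whereas the paper uses a second-order Taylor expansion with a positive semidefinite Hessian, which tacitly assumes twice differentiability not granted by the hypotheses.
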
\vspace{-1.0cm}
\begin{proof}
For any $i\in \{1,\cdots,m\}$, suppose $ \phi_{i}(\boldsymbol{x})$ is differentiable on the neighborhood $\mathsf{N}_{r}(\boldsymbol{x}_{0})$. Then for any $\boldsymbol{x}\in\mathsf{N}_{r}(\boldsymbol{x}_{0})$, there exists $\bar{\boldsymbol{x}}$ on the line segment connecting $\boldsymbol{x}$ and $\boldsymbol{x}_{0}$ such that
\begin{equation}
 \phi_{i}(\boldsymbol{x})= \phi_{i}(\boldsymbol{x}_{0}) + \nabla \phi_{i}(\boldsymbol{x}_{0})(\boldsymbol{x}-\boldsymbol{x}_{0})+ \frac{1}{2}(\boldsymbol{x}-\boldsymbol{x}_{0})^{T}\nabla^{2} \phi_{i}(\bar{\boldsymbol{x}})(\boldsymbol{x}-\boldsymbol{x}_{0}).\label{eq14 }
\end{equation}
Since $ \phi_{i}(\boldsymbol{x})$ is convex, we have
\begin{equation}\theta_{i}(\boldsymbol{x})= \phi_{i}(\boldsymbol{x})- \phi_{i}(\boldsymbol{x}_{0}) - \nabla \phi_{i}(\boldsymbol{x}_{0})(\boldsymbol{x}-\boldsymbol{x}_{0})=\frac{1}{2}(\boldsymbol{x}-\boldsymbol{x}_{0})^{T}\nabla^{2} \phi_{i}(\bar{\boldsymbol{x}})(\boldsymbol{x}-\boldsymbol{x}_{0})\ge 0.\label{eq15}
\end{equation}
When $ \phi_{i}(\boldsymbol{x})$ is a linear function, $\nabla^{2} \phi_{i}(\bar{\boldsymbol{x}})=0$ and $\theta_{i}(\boldsymbol{x})=0$. When $ \phi_{i}(\boldsymbol{x})$ is twice differentiable, the hessian matrix $\nabla^{2} \phi_{i}(\bar{\boldsymbol{x}})\succeq0$ is positive semi-definite and $\theta_{i}(\boldsymbol{x})\ge0$. Thus, inequality (\ref{eq15}) holds.
\begin{equation}
\begin{aligned}
(\boldsymbol{w}-\tilde{\boldsymbol{w}})^{T}[\boldsymbol{\Gamma}(\boldsymbol{w})-\boldsymbol{\Gamma}(\tilde{\boldsymbol{w}})]
=&\left( \begin{array}{c} \boldsymbol{x} -\tilde{\boldsymbol{x}}\\ \boldsymbol{\lambda}-\tilde{\boldsymbol{\lambda}} \end{array} \right)^{T}\left( \begin{array}{c} \mathcal{D} \Phi (\boldsymbol{x})^{T}\boldsymbol{\lambda} -\mathcal{D} \Phi (\tilde{\boldsymbol{x}})^{T}\tilde{\boldsymbol{\lambda}}\\
      -  \Phi (\boldsymbol{x}) +  \Phi (\tilde{\boldsymbol{x}})\end{array} \right)\\
 =&\boldsymbol{\lambda}^{T}[  \Phi (\tilde{\boldsymbol{x}})-  \Phi (\boldsymbol{x})-\mathcal{D} \Phi (\boldsymbol{x})(\tilde{\boldsymbol{x}}-\boldsymbol{x} ) ]+\tilde{\boldsymbol{\lambda}}^{k} [  \Phi (\boldsymbol{x}) -  \Phi (\tilde{\boldsymbol{x}})-\mathcal{D} \Phi (\tilde{\boldsymbol{x}})(\boldsymbol{x} -\tilde{\boldsymbol{x}})]\\
 =&\boldsymbol{\lambda}^{T}\boldsymbol{\Theta}(\tilde{\boldsymbol{x}})+\tilde{\boldsymbol{\lambda}}^{k}\boldsymbol{\Theta}(\boldsymbol{x}),
\end{aligned}\label{eq16 }
\end{equation}
where $\boldsymbol{\Theta}(\cdot)=[\theta_{1},\cdots,\theta_{m}]^{T}\in \mathbb{R}^{m\times 1}$. Since $\boldsymbol{\Theta}\ge0$, $\boldsymbol{\lambda}\ge0$ and $\tilde{\boldsymbol{\lambda}}\ge0$, we have 
\begin{equation}
(\boldsymbol{w}-\tilde{\boldsymbol{w}})^{T}[\boldsymbol{\Gamma}(\boldsymbol{w})-\boldsymbol{\Gamma}(\tilde{\boldsymbol{w}})]\ge0, \quad\forall \ \boldsymbol{w}, \tilde{\boldsymbol{w}}\in \mathcal{X}\times \mathcal{Z}.\label{eq17 }
\end{equation}
Thus, this lemma holds.
\end{proof}
\begin{remark}
When the variable $\boldsymbol{\lambda}$ is defined on a set of non positive real numbers $\mathbb{R}_{-}^{m}$, the corresponding monotone operator becomes $\boldsymbol{\Gamma}(\boldsymbol{w})=[-\boldsymbol{\lambda}^{T}\mathcal{D} \Phi (\boldsymbol{x}),   \Phi (\boldsymbol{x})^{T}]^{T}$.
\end{remark}
\subsection{Optimal Condition and Variational Inequality for (\ref{eq2})}
In this part, we review the optimal conditions for problem (\ref{eq2}). Then they can be written as a variational inequality. For convex problem (\ref{eq2}),  the Karush-Kuhn-Tucker (KKT) conditions are given as 
\begin{equation}
\begin{aligned}
  \nabla f(\boldsymbol{x}^{*})+ \sum_{i=1}^{m} \lambda _{i}^{*} \nabla\phi_{i}(\boldsymbol{x}^{*})=\ & 0,\\
\lambda _{i}^{*} \phi_{i}(\boldsymbol{x}^{*})=\ &0, \ i=1,\cdots,m,\\
  \phi_{i} (\boldsymbol{x}^{*})\leq\ &0,\ i=1,\cdots,m,\\
   \lambda_{i}^{*}\ge\ &0,\ i=1,\cdots,m,
  \end{aligned} \label{eq18}\end{equation}
where $\boldsymbol{x}^{*}$ and $( \lambda _{1}^{*},\cdots, \lambda _{m}^{*})$ are any primary and dual optimal point for (\ref{eq2}). 
By Lemma \ref{lemma1}, the saddle point in (\ref{eq20}) satisfies the following variational inequality:
\begin{equation}
 \left\{ \begin{array}{rrl}
     \boldsymbol{x}^{*}\in \mathcal{X},&  f(\boldsymbol{x})- f(\boldsymbol{x}^{*})+(\boldsymbol{x}-\boldsymbol{x}^{*})^{T}\mathcal{D} \Phi (\boldsymbol{x}^{*})^{T}\boldsymbol{\lambda}^{*}\ge0,& \forall \ \boldsymbol{x}\in \mathcal{X},\\ 
     \boldsymbol{\lambda}^{*}\in  \mathcal{Z},& (\boldsymbol{\lambda}-\boldsymbol{\lambda}^{*})^{T} [- \Phi (\boldsymbol{x}^{*})]\ge0,& \forall \ \boldsymbol{\lambda}\in  \mathcal{Z},\\
        \end{array}\right.\label{eq22 }
\end{equation}
where $\mathcal{D} \Phi (\boldsymbol{x})=[\nabla \phi_{1},\cdots,\nabla \phi_{m}]^{T}\in \mathbb{R}^{m\times n}$ and $\mathcal{D} \Phi (\boldsymbol{x})^{T}\boldsymbol{\lambda}=\sum_{i=1}^{m} \lambda_{i}\nabla \phi_{i}(\boldsymbol{x})\in \mathbb{R}^{n\times 1}.$
The optimal condition can be characterized as a monotone variational inequality:
\begin{equation}
 \boldsymbol{w}^{*}\in \Omega, \quad f(\boldsymbol{x})- f(\boldsymbol{x}^{*})+(\boldsymbol{w}-\boldsymbol{w}^{*})^{T}\boldsymbol{\Gamma}(\boldsymbol{w}^{*})\ge0, \quad\forall \ \boldsymbol{w}\in \Omega, \label{eq23}
\end{equation}
where 
\begin{equation}
 \boldsymbol{w}=\left( \begin{array}{c} \boldsymbol{x} \\
      \boldsymbol{\lambda} \end{array} \right), \
 \boldsymbol{\Gamma}(\boldsymbol{w})=\left( \begin{array}{c} \mathcal{D} \Phi (\boldsymbol{x})^{T}\boldsymbol{\lambda} \\
        -\Phi (\boldsymbol{x}) \end{array} \right) \mbox{ and } \Omega=\mathcal{X}\times \mathcal{Z} .\label{eq24 }
\end{equation}
By Lemma \ref{lemma2}, we know that the operator $\boldsymbol{\Gamma}(\boldsymbol{w})$ is monotone.

\section{Relaxed Proximal Point Algorithm}\label{section3}
In this section, we customize the proximal matrix $\boldsymbol{\Sigma}_{k}$ to ensure that it is symmetrical and positive-definite, in accordance with the convergence criterion of the customized PPA. We then use this customized matrix to derive a relaxed PPA.

\subsection{From the Primal-Dual Method to the Relaxed PPA}
Given $(\boldsymbol{x}^{k},\boldsymbol{\lambda}^{k})$, the primal-dual method alternately produces $\tilde{\boldsymbol{x}}^{k}$ and $\tilde{\boldsymbol{\lambda}}^{k}$ by solving the following problems.
\begin{equation} \left\{ \begin{array}{lrl}
\tilde{\boldsymbol{x}}^{k}=\arg\min\{\mathcal{L}''(\boldsymbol{x},\boldsymbol{\lambda}^{k})+\frac{r_{k}}{2}\norm{\boldsymbol{x}-\boldsymbol{x}^{k}}^{2}\mid \boldsymbol{x}\in \mathcal{X}\},\\
\tilde{\boldsymbol{\lambda}}^{k}=\arg\max\{\mathcal{L}''(\tilde{\boldsymbol{x}}^{k},\boldsymbol{\lambda})-\frac{s_{k}}{2}\Vert\boldsymbol{\lambda}-\boldsymbol{\lambda}^{k}\Vert^{2}\mid \boldsymbol{\lambda}\in  \mathcal{Z}\}.
\end{array}\right.\label{eq25}
\end{equation}
By (\ref{eq25}), we obtain the following variational inequality. 
\begin{equation}
 \left\{ \begin{array}{rrl}
     \tilde{\boldsymbol{x}}^{k}\in \mathcal{X},&  f(\boldsymbol{x})- f(\tilde{\boldsymbol{x}}^{k})+(\boldsymbol{x}-\tilde{\boldsymbol{x}}^{k})^{T}[\mathcal{D} \Phi (\tilde{\boldsymbol{x}}^{k})^{T}\boldsymbol{\lambda}^{k}+r_{k}(\tilde{\boldsymbol{x}}^{k}-\boldsymbol{x}^{k})]\ge0,& \forall \ \boldsymbol{x}\in \mathcal{X},\\ 
     \tilde{\boldsymbol{\lambda}}^{k}\in  \mathcal{Z},& (\boldsymbol{\lambda}-\tilde{\boldsymbol{\lambda}}^{k})^{T}[ -\Phi (\tilde{\boldsymbol{x}}^{k})+s_{k}(\tilde{\boldsymbol{\lambda}}^{k}-\boldsymbol{\lambda}^{k})]\ge0,& \forall \ \boldsymbol{\lambda}\in  \mathcal{Z}.\\
        \end{array}\right. \label{eq26 }
\end{equation}
Then we have 
\begin{equation}
   f(\boldsymbol{x})- f(\tilde{\boldsymbol{x}}^{k})+(\boldsymbol{w}- \tilde{\boldsymbol{w}}^{k})^{T}\left\{\boldsymbol{\Gamma}( \tilde{\boldsymbol{w}}^{k})+\left(
\begin{array}{r}
r_{k}(\tilde{\boldsymbol{x}}^{k}-\boldsymbol{x}^{k}) -\mathcal{D} \Phi (\tilde{\boldsymbol{x}}^{k})^{T}(\tilde{\boldsymbol{\lambda}}^{k}-\boldsymbol{\lambda}^{k}) \\
s_{k}(\tilde{\boldsymbol{\lambda}}^{k}-\boldsymbol{\lambda}^{k})
\end{array}
\right)\right\}\ge0. \quad\forall \ \boldsymbol{w}\in \Omega.\label{eq27 }
\end{equation}
Further, the above variational inequality can be written as
\begin{equation}
 f(\boldsymbol{x})- f( \tilde{\boldsymbol{x}}^{k})+(\boldsymbol{w}- \tilde{\boldsymbol{w}}^{k})^{T}\left\{\boldsymbol{\Gamma}( \tilde{\boldsymbol{w}}^{k})+\boldsymbol{Q}_{k}(\tilde{\boldsymbol{w}}^{k}-\boldsymbol{w}^{k})\right\}\ge0, \quad\forall \ \boldsymbol{w}\in \Omega,\label{eq28}
\end{equation}
where the proximal matrix
\begin{equation} \boldsymbol{Q}_{k}=\left(
\begin{array}{cc}
r_{k}\boldsymbol{I}_{n} & -\mathcal{D} \Phi (\tilde{\boldsymbol{x}}^{k})^{T} \\
\boldsymbol{0}&s_{k}\boldsymbol{I}_{m} 
\end{array}
\right) \mbox{ is not symmetrical.}\nonumber
\end{equation}
To use the customized PPA, the proximal matrix must be symmetric and positive-definite. However, it is clear that $\boldsymbol{Q}_{k}$ does not satisfy this requirement. An example of linear programming in \cite{E4} has shown that the primal-dual method does not converge. Nonetheless, we can modify the non-symmetric matrix $\boldsymbol{Q}_{k}$ into a customized symmetric matrix $\boldsymbol{\Sigma}_{k}$: $\boldsymbol{\Sigma}(\tilde{\boldsymbol{x}}^{k})$ such as
\begin{equation} \boldsymbol{Q}_{k}=\left(
\begin{array}{cc}
r_{k}\boldsymbol{I}_{n} & -\mathcal{D} \Phi (\tilde{\boldsymbol{x}}^{k})^{T} \\
\boldsymbol{0}&s_{k}\boldsymbol{I}_{m} 
\end{array}
\right)\quad\Rightarrow\quad \boldsymbol{\Sigma}_{k}=\left(
\begin{array}{cc}
r_{k}\boldsymbol{I}_{n} & -\mathcal{D} \Phi (\tilde{\boldsymbol{x}}^{k})^{T} \\
-\mathcal{D} \Phi (\tilde{\boldsymbol{x}}^{k})&s_{k}\boldsymbol{I}_{m} 
\end{array}
\right). \label{eq29 }
\end{equation}
Let the singular value decomposition of $\mathcal{D} \Phi (\tilde{\boldsymbol{x}}{}^{k})$ be equal to $\boldsymbol{U}\boldsymbol{\Lambda}\boldsymbol{V}^{T}$, where $\boldsymbol{U}$ and $\boldsymbol{V}$ are unitary matrices and $\boldsymbol{\Lambda}$ is a rectangular diagonal matrix with the singular values lying on the diagonal.
\begin{equation}
\begin{aligned}
r_{k}s_{k}\boldsymbol{I}_{m}-\mathcal{D} \Phi (\tilde{\boldsymbol{x}}{}^{k})\mathcal{D} \Phi (\tilde{\boldsymbol{x}}{}^{k})^{T}
=& r_{k}s_{k}\boldsymbol{I}_{m}-\boldsymbol{U}\boldsymbol{\Lambda}\boldsymbol{V}^{T}\boldsymbol{V}\boldsymbol{\Lambda}\boldsymbol{U}^{T}\\
=& r_{k}s_{k}\boldsymbol{I}_{m}-\boldsymbol{U}\boldsymbol{\Lambda}^{2}\boldsymbol{U}^{T}\\
=& \boldsymbol{U}\left(r_{k}s_{k}\boldsymbol{I}_{m}-\boldsymbol{\Lambda}^{2}\right)\boldsymbol{U}^{T}
\end{aligned}\label{eq30}
\end{equation}
For given $r_{k}>0$ and $s_{k}>0$, the symmetric matrix $\boldsymbol{\Sigma}_{k}$ is positive definite if and only if $r_{k}s_{k}\boldsymbol{I}_{m}-\mathcal{D} \Phi (\tilde{\boldsymbol{x}}{}^{k})\mathcal{D} \Phi (\tilde{\boldsymbol{x}}{}^{k})^{T}\succ0$. According to (\ref{eq30}), to reach this condition, the value of $r_{k}s_{k}$ should be greater than the maximum singular value of $\mathcal{D} \Phi (\tilde{\boldsymbol{x}}{}^{k})\mathcal{D} \Phi (\tilde{\boldsymbol{x}}{}^{k})^{T}$, i.e., $r_{k}s_{k}>\rho(\mathcal{D} \Phi (\tilde{\boldsymbol{x}}{}^{k})\mathcal{D} \Phi (\tilde{\boldsymbol{x}}{}^{k})^{T})$.
Then the variational inequality satisfies this form.
\begin{equation}
  f(\boldsymbol{x})- f( \tilde{\boldsymbol{x}}^{k})+(\boldsymbol{w}- \tilde{\boldsymbol{w}}^{k})^{T}\left\{\boldsymbol{\Gamma}( \tilde{\boldsymbol{w}}^{k})+\boldsymbol{\Sigma}_{k}(\tilde{\boldsymbol{w}}^{k}-\boldsymbol{w}^{k})\right\}\ge0, \quad\forall \ \boldsymbol{w}\in \Omega, \label{eq31}
\end{equation}
For any $ \boldsymbol{w}\in \Omega$, we expend (\ref{eq31}) as following
\begin{equation}
\begin{aligned}
   f(\boldsymbol{x})- f( \tilde{\boldsymbol{x}}^{k})+
   \left( \begin{array}{c} \boldsymbol{x}-\tilde{\boldsymbol{x}}^{k} \\
      \boldsymbol{\lambda} -\tilde{\boldsymbol{\lambda}}^{k}\end{array} \right)^{T}\left\{\left( \begin{array}{c} \mathcal{D} \Phi (\tilde{\boldsymbol{x}}^{k})^{T}\tilde{\boldsymbol{\lambda}}^{k}\\
        -\Phi (\tilde{\boldsymbol{x}}^{k}) \end{array} \right)+\left(
\begin{array}{c}
r_{k}(\tilde{\boldsymbol{x}}^{k}-\boldsymbol{x}^{k}) -\mathcal{D} \Phi (\tilde{\boldsymbol{x}}^{k})^{T}(\tilde{\boldsymbol{\lambda}}^{k}-\boldsymbol{\lambda}^{k}) \\
-\mathcal{D} \Phi (\tilde{\boldsymbol{x}}^{k})(\tilde{\boldsymbol{x}}^{k}-\boldsymbol{x}^{k}) +s_{k}(\tilde{\boldsymbol{\lambda}}^{k}-\boldsymbol{\lambda}^{k})
\end{array}
\right)\right\}\ge0.
 \end{aligned}
\end{equation}
By simplifying the above inequality, we obtain 
\begin{equation}
 \left\{ \begin{array}{rrl}
     \tilde{\boldsymbol{x}}^{k}\in \mathcal{X},&  f(\boldsymbol{x})- f(\tilde{\boldsymbol{x}}^{k})+(\boldsymbol{x}-\tilde{\boldsymbol{x}}^{k})^{T}[\mathcal{D} \Phi (\tilde{\boldsymbol{x}}^{k})^{T}\boldsymbol{\lambda}^{k}+r_{k}(\tilde{\boldsymbol{x}}^{k}-\boldsymbol{x}^{k})]\ge0,& \forall \ \boldsymbol{x}\in \mathcal{X},\\ 
     \tilde{\boldsymbol{\lambda}}^{k}\in  \mathcal{Z},& (\boldsymbol{\lambda}-\tilde{\boldsymbol{\lambda}}^{k})^{T}[ -\Phi (\tilde{\boldsymbol{x}}^{k})-\mathcal{D} \Phi (\tilde{\boldsymbol{x}}^{k})(\tilde{\boldsymbol{x}}^{k}-\boldsymbol{x}^{k})+s_{k}(\tilde{\boldsymbol{\lambda}}^{k}-\boldsymbol{\lambda}^{k})]\ge0,& \forall \ \boldsymbol{\lambda}\in  \mathcal{Z}.\\
        \end{array}\right. \label{eq32}
\end{equation}
For the given regularization parameters $(r_{k}, s_{k})$, the customized PPA method solves the following problems.
\begin{equation}
\tilde{\boldsymbol{x}}^{k}=\arg\min\{\mathcal{L}''(\boldsymbol{x},\boldsymbol{\lambda}^{k})+\frac{r_{k}}{2}\norm{\boldsymbol{x}-\boldsymbol{x}^{k}}^{2}\mid \boldsymbol{x}\in \mathcal{X}\}.\label{eq33}
\end{equation}
\begin{equation}\begin{aligned}
\tilde{\boldsymbol{\lambda}}^{k}=&\arg\max\{\mathcal{L}''(\tilde{\boldsymbol{x}}^{k},\boldsymbol{\lambda})+\boldsymbol{\lambda}^{T}\mathcal{D} \Phi (\tilde{\boldsymbol{x}}^{k})(\tilde{\boldsymbol{x}}^{k}-\boldsymbol{x}^{k}) -\frac{s_{k}}{2}\Vert\boldsymbol{\lambda}-\boldsymbol{\lambda}^{k}\Vert^{2}\mid \boldsymbol{\lambda}\in  \mathcal{Z}\}\\
=&P_{\mathcal{Z}}\left(\boldsymbol{\lambda}^{k}+\frac{1}{s_{k}}[ \Phi (\tilde{\boldsymbol{x}}^{k}) +\mathcal{D} \Phi (\tilde{\boldsymbol{x}}^{k})(\tilde{\boldsymbol{x}}^{k}-\boldsymbol{x}^{k})]\right).
\end{aligned}\label{eq34}
\end{equation}
In the relaxed step, we take 
\begin{equation}
\boldsymbol{w}^{k+1}=\boldsymbol{w}^{k}-\gamma(\boldsymbol{w}^{k}-\tilde{\boldsymbol{w}}^{k}), \gamma\in (0, 2).\label{eq35}
\end{equation}
Note that the variational inequality (\ref{eq32}) cannot be written as a saddle point problem because of the presence of  $\boldsymbol{\lambda}^{T}\mathcal{D} \Phi (\tilde{\boldsymbol{x}}^{k})(\tilde{\boldsymbol{x}}^{k}-\boldsymbol{x}^{k})$. 
\begin{remark}
In practical computation, we choose a step size $\gamma \in [1, 2)$ to increase the speed of convergence. If the parameter $\gamma$ equals 1, then $\boldsymbol{w}^{k+1}=\tilde{\boldsymbol{w}}^{k}$, and the relaxed PPA is transformed into the customized PPA. When all functions in constraints are linear, the proposed PPA is equivalent to the classical customized PPA in \cite{E4}. Therefore, the proposed algorithm is an extended version of PPAs.
\end{remark}

\subsection{Convergence Analysis for the Relaxed PPA}
 Besides the PPA in the Euclidean-norm, we can consider the PPA in $\boldsymbol{\Sigma}_{k}$-norm. During the customized PPA, for the given $\boldsymbol{w}^{k}$, the new iterate $\tilde{\boldsymbol{w}}^{k}$ of the PPA in $\boldsymbol{\Sigma}_{k}$-norm satisfies the following inequality.
\begin{equation}
 \tilde{\boldsymbol{w}}^{k}\in \Omega, \quad f(\boldsymbol{x})- f(\tilde{\boldsymbol{w}}^{k})+(\boldsymbol{w}-\tilde{\boldsymbol{w}}^{k})^{T}[\boldsymbol{\Gamma}(\tilde{\boldsymbol{w}}^{k})-\boldsymbol{\Sigma}_{k}(\tilde{\boldsymbol{w}}^{k}-\boldsymbol{w}^{k})]\ge0,\quad \forall \ \boldsymbol{w}\in \Omega, \label{eq36}
\end{equation}
 where $\tilde{\boldsymbol{w}}^{k}$ is called the proximal point of the $k$-th iteration. Setting $\boldsymbol{w} =\boldsymbol{w}^{*}$ in (\ref{eq36}), we obtain

\begin{equation}
\begin{aligned}
 (\tilde{\boldsymbol{w}}^{k}-\boldsymbol{w}^{*})^{T}\boldsymbol{\Sigma}_{k}(\boldsymbol{w}^{k}-\tilde{\boldsymbol{w}}^{k})\ge&  f(\tilde{\boldsymbol{w}}^{k})- f(\boldsymbol{x}^{*})+(\tilde{\boldsymbol{w}}^{k}-\boldsymbol{w}^{*})^{T}\boldsymbol{\Gamma}(\tilde{\boldsymbol{w}}^{k})\\
 \stackrel{(\ref{eq13})}{\ge}&  f(\tilde{\boldsymbol{w}}^{k})- f(\boldsymbol{x}^{*})+(\tilde{\boldsymbol{w}}^{k}-\boldsymbol{w}^{*})^{T}\boldsymbol{\Gamma}(\boldsymbol{w}^{*})\stackrel{(\ref{eq23})}{\ge}0
 \end{aligned}\label{eq37}
\end{equation}
By (\ref{eq37}), we get 
\begin{equation}
\begin{aligned}
 \left[(\tilde{\boldsymbol{w}}^{k}-\boldsymbol{w}^{k})+(\boldsymbol{w}^{k}-\boldsymbol{w}^{*})\right]^{T}\boldsymbol{\Sigma}_{k}(\boldsymbol{w}^{k}-\tilde{\boldsymbol{w}}^{k})&\ge0\\
 (\boldsymbol{w}^{k}-\boldsymbol{w}^{*})^{T}\boldsymbol{\Sigma}_{k}(\boldsymbol{w}^{k}-\tilde{\boldsymbol{w}}^{k})&\ge(\tilde{\boldsymbol{w}}^{k}-\boldsymbol{w}^{k})^{T}\boldsymbol{\Sigma}_{k}(\tilde{\boldsymbol{w}}^{k}-\boldsymbol{w}^{k})\\
 (\boldsymbol{w}^{k}-\boldsymbol{w}^{*})^{T}\boldsymbol{\Sigma}_{k}(\boldsymbol{w}^{k}-\tilde{\boldsymbol{w}}^{k})&\ge\norm{\tilde{\boldsymbol{w}}{}^{k}-\boldsymbol{w}^{k}}_{\boldsymbol{\Sigma}_{k}}^{2}.
 \end{aligned}\label{eq38}
\end{equation}

\begin{theorem}{(Sequence contraction)}
For the given $\boldsymbol{w}^{k}\in \Omega$, let $\tilde{\boldsymbol{w}}^{k}$ be generated by the customized PPA method and $\boldsymbol{w}^{k+1}=\boldsymbol{w}^{k}-\gamma(\boldsymbol{w}^{k}-\tilde{\boldsymbol{w}}^{k})$. Then the sequence $\boldsymbol{w}^{k}$ converges the optimal solution of problem (\ref{eq2}) and satisfies the following inequality.
\begin{equation}
\norm{\boldsymbol{w}^{k+1}-\boldsymbol{w}^{*}}_{\boldsymbol{\Sigma}_{k}}^{2}\leq\norm{\boldsymbol{w}^{k}-\boldsymbol{w}^{*}}_{\boldsymbol{\Sigma}_{k}}^{2}-\gamma(2-\gamma)\norm{\tilde{\boldsymbol{w}}{}^{k}-\boldsymbol{w}^{k}}_{\boldsymbol{\Sigma}_{k}}^{2},\label{eq39 }
\end{equation}
 where the relaxed parameter $\gamma\in (0, 2)$.
\end{theorem}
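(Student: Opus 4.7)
The plan is to obtain the contraction inequality by a direct expansion of the squared $\boldsymbol{\Sigma}_k$-norm, using the update rule $\boldsymbol{w}^{k+1}=\boldsymbol{w}^{k}-\gamma(\boldsymbol{w}^{k}-\tilde{\boldsymbol{w}}^{k})$ together with the key bound already established in (\ref{eq38}), namely
\begin{equation*}
(\boldsymbol{w}^{k}-\boldsymbol{w}^{*})^{T}\boldsymbol{\Sigma}_{k}(\boldsymbol{w}^{k}-\tilde{\boldsymbol{w}}^{k})\ge\norm{\tilde{\boldsymbol{w}}^{k}-\boldsymbol{w}^{k}}_{\boldsymbol{\Sigma}_{k}}^{2}.
\end{equation*}
This is the only nontrivial ingredient, and it was derived by combining the variational inequality (\ref{eq36}) with the monotonicity of $\boldsymbol{\Gamma}$ from Lemma \ref{lemma2} and the optimality of $\boldsymbol{w}^{*}$ in (\ref{eq23}); so I will just invoke it.

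First I would write $\boldsymbol{w}^{k+1}-\boldsymbol{w}^{*}=(\boldsymbol{w}^{k}-\boldsymbol{w}^{*})-\gamma(\boldsymbol{w}^{k}-\tilde{\boldsymbol{w}}^{k})$ and expand the squared $\boldsymbol{\Sigma}_k$-norm using the bilinear-form identity $\norm{a-b}_{\boldsymbol{\Sigma}_k}^{2}=\norm{a}_{\boldsymbol{\Sigma}_k}^{2}-2a^{T}\boldsymbol{\Sigma}_k b+\norm{b}_{\boldsymbol{\Sigma}_k}^{2}$, which is valid because $\boldsymbol{\Sigma}_k$ is symmetric and positive-definite by construction (see (\ref{eq30}) and the condition $r_k s_k>\rho(\mathcal{D}\Phi(\tilde{\boldsymbol{x}}^k)\mathcal{D}\Phi(\tilde{\boldsymbol{x}}^k)^{T})$). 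This produces
\begin{equation*}
\norm{\boldsymbol{w}^{k+1}-\boldsymbol{w}^{*}}_{\boldsymbol{\Sigma}_{k}}^{2}=\norm{\boldsymbol{w}^{k}-\boldsymbol{w}^{*}}_{\boldsymbol{\Sigma}_{k}}^{2}-2\gamma(\boldsymbol{w}^{k}-\boldsymbol{w}^{*})^{T}\boldsymbol{\Sigma}_{k}(\boldsymbol{w}^{k}-\tilde{\boldsymbol{w}}^{k})+\gamma^{2}\norm{\boldsymbol{w}^{k}-\tilde{\boldsymbol{w}}^{k}}_{\boldsymbol{\Sigma}_{k}}^{2}.
\end{equation*}
Then I would plug (\ref{eq38}) into the cross term, which is where the sign flips and the monotonicity of the operator pays off: the $-2\gamma$ factor combined with $\ge\norm{\tilde{\boldsymbol{w}}^{k}-\boldsymbol{w}^{k}}_{\boldsymbol{\Sigma}_{k}}^{2}$ yields an upper bound $-2\gamma\norm{\tilde{\boldsymbol{w}}^{k}-\boldsymbol{w}^{k}}_{\boldsymbol{\Sigma}_{k}}^{2}$. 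Collecting the two terms involving $\norm{\tilde{\boldsymbol{w}}^{k}-\boldsymbol{w}^{k}}_{\boldsymbol{\Sigma}_{k}}^{2}$ gives the factor $\gamma^{2}-2\gamma=-\gamma(2-\gamma)$, producing exactly (\ref{eq39 }).

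For the convergence claim, I would note that $\gamma\in(0,2)$ makes $\gamma(2-\gamma)>0$, so the contraction inequality shows that $\{\norm{\boldsymbol{w}^{k}-\boldsymbol{w}^{*}}_{\boldsymbol{\Sigma}_{k}}^{2}\}$ is essentially non-increasing and $\sum_{k}\norm{\tilde{\boldsymbol{w}}^{k}-\boldsymbol{w}^{k}}_{\boldsymbol{\Sigma}_{k}}^{2}<\infty$, forcing $\tilde{\boldsymbol{w}}^{k}-\boldsymbol{w}^{k}\to 0$; plugging this into the variational inequality (\ref{eq36}) and passing to a subsequential limit shows that any cluster point satisfies (\ref{eq23}) and is hence optimal, after which Fej\'er-type monotonicity upgrades this to convergence of the full sequence.

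The main obstacle here is subtle: because $\boldsymbol{\Sigma}_k$ depends on $\tilde{\boldsymbol{x}}^{k}$ and hence varies with the iteration, the ``monotonicity'' statement $\norm{\boldsymbol{w}^{k+1}-\boldsymbol{w}^{*}}_{\boldsymbol{\Sigma}_{k}}\le\norm{\boldsymbol{w}^{k}-\boldsymbol{w}^{*}}_{\boldsymbol{\Sigma}_{k}}$ is measured in a $k$-dependent norm, so to conclude actual convergence of $\boldsymbol{w}^{k}$ one needs uniform bounds on the spectrum of $\boldsymbol{\Sigma}_k$ (bounded from below and above independently of $k$). I would therefore impose or verify such uniform bounds on $r_k,s_k$ and on $\mathcal{D}\Phi(\tilde{\boldsymbol{x}}^k)$ before invoking the usual Fej\'er-monotonicity argument; the purely algebraic contraction inequality itself, however, follows immediately from the expansion described above.
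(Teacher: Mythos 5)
Your derivation of the contraction inequality is exactly the paper's own argument: expand $\norm{(\boldsymbol{w}^{k}-\boldsymbol{w}^{*})-\gamma(\boldsymbol{w}^{k}-\tilde{\boldsymbol{w}}^{k})}_{\boldsymbol{\Sigma}_{k}}^{2}$, substitute the bound (\ref{eq38}) into the cross term, and collect $\gamma^{2}-2\gamma=-\gamma(2-\gamma)$. Your additional remarks on upgrading the inequality to actual sequence convergence (in particular the need for uniform spectral bounds on the iteration-dependent matrices $\boldsymbol{\Sigma}_{k}$) go beyond what the paper provides, which stops at the inequality itself, and they correctly identify a point the paper leaves unaddressed.
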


\begin{proof}
By (\ref{eq35}) and (\ref{eq38}), we take
\begin{equation}
\begin{aligned}
\norm{\boldsymbol{w}^{k+1}-\boldsymbol{w}^{*}}_{\boldsymbol{\Sigma}_{k}}^{2}
=&\norm{\boldsymbol{w}^{k+1}-\boldsymbol{w}^{k}+\boldsymbol{w}^{k}-\boldsymbol{w}^{*}}_{\boldsymbol{\Sigma}_{k}}^{2}\\
\stackrel{(\ref{eq35})}{=}&\norm{(\boldsymbol{w}^{k}-\boldsymbol{w}^{*})-\gamma(\boldsymbol{w}^{k}-\tilde{\boldsymbol{w}}{}^{k})}_{\boldsymbol{\Sigma}_{k}}^{2}\\
=&\norm{\boldsymbol{w}^{k}-\boldsymbol{w}^{*}}_{\boldsymbol{\Sigma}_{k}}^{2}-2\gamma(\boldsymbol{w}^{k}-\boldsymbol{w}^{*})^{T}\boldsymbol{\Sigma}_{k}(\boldsymbol{w}^{k}-\tilde{\boldsymbol{w}}^{k})+\gamma^{2}\norm{\boldsymbol{w}^{k}-\tilde{\boldsymbol{w}}{}^{k}}_{\boldsymbol{\Sigma}_{k}}^{2}\\
\stackrel{(\ref{eq38})}{\leq}&\norm{\boldsymbol{w}^{k}-\boldsymbol{w}^{*}}_{\boldsymbol{\Sigma}_{k}}^{2}-2\gamma\norm{\tilde{\boldsymbol{w}}{}^{k}-\boldsymbol{w}^{k}}_{\boldsymbol{\Sigma}_{k}}^{2}+\gamma^{2}\norm{\boldsymbol{w}^{k}-\tilde{\boldsymbol{w}}{}^{k}}_{\boldsymbol{\Sigma}_{k}}^{2}\\
=& \norm{\boldsymbol{w}^{k}-\boldsymbol{w}^{*}}_{\boldsymbol{\Sigma}_{k}}^{2}-\gamma(2-\gamma)\norm{\tilde{\boldsymbol{w}}{}^{k}-\boldsymbol{w}^{k}}_{\boldsymbol{\Sigma}_{k}}^{2}.
\end{aligned}\label{eq40 }
\end{equation}
Thus, this theorem holds.
\end{proof}

\begin{lemma}
Let the sequence $\{\boldsymbol{w}^{k},\tilde{\boldsymbol{w}}{}^{k}\}$ be generated by the customized PPA and $\boldsymbol{\Sigma}_{k}$ be the customized proximal matrix. Then we have 
\begin{equation}
f(\boldsymbol{x})-f(\tilde{\boldsymbol{x}}{}^{k})+(\boldsymbol{w}-\tilde{\boldsymbol{w}}{}^{k})\boldsymbol{\Gamma}(\tilde{\boldsymbol{w}}{}^{k})+\frac{1}{2\gamma}(\Vert \boldsymbol{w}-\boldsymbol{w}^{k} \Vert_{\boldsymbol{\Sigma}_{k}}^{2}-\Vert \boldsymbol{w}-\boldsymbol{w}^{k+1} \Vert_{\boldsymbol{\Sigma}_{k}}^{2})\ge (1-\frac{\gamma}{2})\Vert \boldsymbol{w}^{k}-\tilde{\boldsymbol{w}}^{k} \Vert_{\boldsymbol{\Sigma}_{k}}^{2}, \ \forall \ \boldsymbol{w}\in \Omega. \label{eq41}
\end{equation}
\end{lemma}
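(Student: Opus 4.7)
The plan is to start from the variational inequality (\ref{eq31}) that characterizes the proximal point $\tilde{\boldsymbol{w}}^{k}$, namely
\[
f(\boldsymbol{x})-f(\tilde{\boldsymbol{x}}^{k})+(\boldsymbol{w}-\tilde{\boldsymbol{w}}^{k})^{T}\boldsymbol{\Gamma}(\tilde{\boldsymbol{w}}^{k})\ \ge\ (\boldsymbol{w}-\tilde{\boldsymbol{w}}^{k})^{T}\boldsymbol{\Sigma}_{k}(\boldsymbol{w}^{k}-\tilde{\boldsymbol{w}}^{k}),
\]
and show that the right-hand side equals the desired difference of $\boldsymbol{\Sigma}_{k}$-weighted distances plus the slack $(1-\gamma/2)\|\boldsymbol{w}^{k}-\tilde{\boldsymbol{w}}^{k}\|_{\boldsymbol{\Sigma}_{k}}^{2}$. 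The heart of the argument is therefore a purely algebraic identity, not a new inequality.

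First I would use the relaxation rule (\ref{eq35}) to replace $\boldsymbol{w}^{k}-\tilde{\boldsymbol{w}}^{k}$ by $\tfrac{1}{\gamma}(\boldsymbol{w}^{k}-\boldsymbol{w}^{k+1})$ on the right-hand side, so that the quantity to analyze becomes $\tfrac{1}{\gamma}(\boldsymbol{w}-\tilde{\boldsymbol{w}}^{k})^{T}\boldsymbol{\Sigma}_{k}(\boldsymbol{w}^{k}-\boldsymbol{w}^{k+1})$. Next I would split the left factor as $\boldsymbol{w}-\tilde{\boldsymbol{w}}^{k}=(\boldsymbol{w}-\boldsymbol{w}^{k+1})+(\boldsymbol{w}^{k+1}-\tilde{\boldsymbol{w}}^{k})$ and note that the relaxation step also gives $\boldsymbol{w}^{k+1}-\tilde{\boldsymbol{w}}^{k}=(1-\gamma)(\boldsymbol{w}^{k}-\tilde{\boldsymbol{w}}^{k})$, which together with $\boldsymbol{w}^{k}-\boldsymbol{w}^{k+1}=\gamma(\boldsymbol{w}^{k}-\tilde{\boldsymbol{w}}^{k})$ turns the second piece into $\gamma(1-\gamma)\|\boldsymbol{w}^{k}-\tilde{\boldsymbol{w}}^{k}\|_{\boldsymbol{\Sigma}_{k}}^{2}$.

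For the first piece, I would invoke the standard polarization identity for the symmetric positive-definite form $\boldsymbol{\Sigma}_{k}$,
\[
2\boldsymbol{a}^{T}\boldsymbol{\Sigma}_{k}\boldsymbol{b}=\|\boldsymbol{a}\|_{\boldsymbol{\Sigma}_{k}}^{2}+\|\boldsymbol{b}\|_{\boldsymbol{\Sigma}_{k}}^{2}-\|\boldsymbol{a}-\boldsymbol{b}\|_{\boldsymbol{\Sigma}_{k}}^{2},
\]
with $\boldsymbol{a}=\boldsymbol{w}-\boldsymbol{w}^{k+1}$ and $\boldsymbol{b}=\boldsymbol{w}^{k}-\boldsymbol{w}^{k+1}$, noting that $\boldsymbol{a}-\boldsymbol{b}=\boldsymbol{w}-\boldsymbol{w}^{k}$ and that $\|\boldsymbol{w}^{k}-\boldsymbol{w}^{k+1}\|_{\boldsymbol{\Sigma}_{k}}^{2}=\gamma^{2}\|\boldsymbol{w}^{k}-\tilde{\boldsymbol{w}}^{k}\|_{\boldsymbol{\Sigma}_{k}}^{2}$. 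Assembling the two pieces and dividing by $\gamma$, the cross term becomes
\[
\tfrac{1}{2\gamma}\bigl(\|\boldsymbol{w}-\boldsymbol{w}^{k+1}\|_{\boldsymbol{\Sigma}_{k}}^{2}-\|\boldsymbol{w}-\boldsymbol{w}^{k}\|_{\boldsymbol{\Sigma}_{k}}^{2}\bigr)+\tfrac{\gamma}{2}\|\boldsymbol{w}^{k}-\tilde{\boldsymbol{w}}^{k}\|_{\boldsymbol{\Sigma}_{k}}^{2}+(1-\gamma)\|\boldsymbol{w}^{k}-\tilde{\boldsymbol{w}}^{k}\|_{\boldsymbol{\Sigma}_{k}}^{2},
\]
and the last two summands collapse to $(1-\gamma/2)\|\boldsymbol{w}^{k}-\tilde{\boldsymbol{w}}^{k}\|_{\boldsymbol{\Sigma}_{k}}^{2}$. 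Substituting back and rearranging gives (\ref{eq41}) exactly.

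The main obstacle is merely the bookkeeping of $\gamma$-factors when combining the polarization identity with the decomposition of $\boldsymbol{w}-\tilde{\boldsymbol{w}}^{k}$; one has to be careful not to double-count the quadratic term $\|\boldsymbol{w}^{k}-\tilde{\boldsymbol{w}}^{k}\|_{\boldsymbol{\Sigma}_{k}}^{2}$, which appears both from $\|\boldsymbol{w}^{k}-\boldsymbol{w}^{k+1}\|_{\boldsymbol{\Sigma}_{k}}^{2}$ in the polarization step and from the $\gamma(1-\gamma)$ remainder. Once this bookkeeping is done, the inequality follows with equality in the algebraic step and the sole use of $\boldsymbol{\Sigma}_{k}\succ 0$ being implicit in treating $\|\cdot\|_{\boldsymbol{\Sigma}_{k}}$ as a norm; no further use of monotonicity (\ref{eq13}) or of the saddle-point inequality (\ref{eq23}) is needed, since the statement is of identity-plus-slack type rather than of contraction type.
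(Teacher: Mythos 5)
Your proposal is correct and follows essentially the same route as the paper: both start from the proximal variational inequality, substitute $\boldsymbol{w}^{k}-\tilde{\boldsymbol{w}}^{k}=\tfrac{1}{\gamma}(\boldsymbol{w}^{k}-\boldsymbol{w}^{k+1})$, and convert the cross term $\tfrac{1}{\gamma}(\boldsymbol{w}-\tilde{\boldsymbol{w}}^{k})^{T}\boldsymbol{\Sigma}_{k}(\boldsymbol{w}^{k}-\boldsymbol{w}^{k+1})$ into the telescoping difference plus the $(1-\gamma/2)$ slack via an exact algebraic identity. The only cosmetic difference is that the paper applies its four-point identity (\ref{eq43}) in one shot and then evaluates $\Vert\tilde{\boldsymbol{w}}^{k}-\boldsymbol{w}^{k}\Vert_{\boldsymbol{\Sigma}_{k}}^{2}-\Vert\tilde{\boldsymbol{w}}^{k}-\boldsymbol{w}^{k+1}\Vert_{\boldsymbol{\Sigma}_{k}}^{2}=\gamma(2-\gamma)\Vert\tilde{\boldsymbol{w}}^{k}-\boldsymbol{w}^{k}\Vert_{\boldsymbol{\Sigma}_{k}}^{2}$, whereas you split $\boldsymbol{w}-\tilde{\boldsymbol{w}}^{k}$ first and use the two-point polarization identity; the bookkeeping lands on the same intermediate equation (\ref{eq46}).
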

\begin{proof}
Referring to (\ref{eq35}), the from (\ref{eq36}) follows that 
\begin{equation}
\begin{aligned}
  f(\boldsymbol{x})- f(\tilde{\boldsymbol{w}}^{k})+(\boldsymbol{w}-\tilde{\boldsymbol{w}}^{k})^{T}\boldsymbol{\Gamma}(\tilde{\boldsymbol{w}}^{k})&\ge(\boldsymbol{w}-\tilde{\boldsymbol{w}}^{k})^{T}\boldsymbol{\Sigma}_{k}(\boldsymbol{w}^{k}-\tilde{\boldsymbol{w}}^{k})\\
  &=\frac{1}{\gamma}(\boldsymbol{w}-\tilde{\boldsymbol{w}}^{k})^{T}\boldsymbol{\Sigma}_{k}(\boldsymbol{w}^{k}-\boldsymbol{w}^{k+1}),\quad \forall \ \boldsymbol{w}\in \Omega. \label{eq42}
 \end{aligned}
\end{equation}
By using the following formula 
\begin{equation}
(\boldsymbol{a}-\boldsymbol{b})^{T}\boldsymbol{\Sigma}_{k}(\boldsymbol{c}-\boldsymbol{d})=\frac{1}{2}\left( \norm{\boldsymbol{a}-\boldsymbol{d}}_{\boldsymbol{\Sigma}_{k}}^{2}-\norm{\boldsymbol{a}-\boldsymbol{c}}_{\boldsymbol{\Sigma}_{k}}^{2}\right)+\frac{1}{2}\left( \norm{\boldsymbol{b}-\boldsymbol{c}}_{\boldsymbol{\Sigma}_{k}}^{2}-\norm{\boldsymbol{b}-\boldsymbol{d}}_{\boldsymbol{\Sigma}_{k}}^{2}\right),\label{eq43}
\end{equation}
 we derive that 
\begin{equation}
\begin{aligned}
(\boldsymbol{w}-\tilde{\boldsymbol{w}}^{k})^{T}\boldsymbol{\Sigma}_{k}(\boldsymbol{w}^{k}-\boldsymbol{w}^{k+1})=&\frac{1}{2}\left( \norm{\boldsymbol{w}-\boldsymbol{w}^{k+1}}_{\boldsymbol{\Sigma}_{k}}^{2}-\norm{\boldsymbol{w}-\boldsymbol{w}^{k}}_{\boldsymbol{\Sigma}_{k}}^{2}\right)\\
&+\frac{1}{2}\left( \norm{\tilde{\boldsymbol{w}}{}^{k}-\boldsymbol{w}^{k}}_{\boldsymbol{\Sigma}_{k}}^{2}-\norm{\tilde{\boldsymbol{w}}{}^{k}-\boldsymbol{w}^{k+1}}_{\boldsymbol{\Sigma}_{k}}^{2}\right).\label{eq44}
\end{aligned}
\end{equation}
On the other hand, the following equation holds.
\begin{eqnarray}
\lefteqn{\norm{\tilde{\boldsymbol{w}}{}^{k}-\boldsymbol{w}^{k}}_{\boldsymbol{\Sigma}_{k}}^{2}-\norm{\tilde{\boldsymbol{w}}{}^{k}-\boldsymbol{w}^{k+1}}_{\boldsymbol{\Sigma}_{k}}^{2}}\nonumber\\
&=&\norm{\tilde{\boldsymbol{w}}{}^{k}-\boldsymbol{w}^{k}}_{\boldsymbol{\Sigma}_{k}}^{2}-\norm{(\tilde{\boldsymbol{w}}{}^{k}-\boldsymbol{w}^{k})-(\boldsymbol{w}^{k+1}-\boldsymbol{w}^{k})}_{\boldsymbol{\Sigma}_{k}}^{2}\nonumber\\
&=&\norm{\tilde{\boldsymbol{w}}{}^{k}-\boldsymbol{w}^{k}}_{\boldsymbol{\Sigma}_{k}}^{2}-\norm{(\tilde{\boldsymbol{w}}{}^{k}-\boldsymbol{w}^{k})-\gamma(\tilde{\boldsymbol{w}}{}^{k}-\boldsymbol{w}^{k})}_{\boldsymbol{\Sigma}_{k}}^{2}\\
&=&\norm{\tilde{\boldsymbol{w}}{}^{k}-\boldsymbol{w}^{k}}_{\boldsymbol{\Sigma}_{k}}^{2}-(1-\gamma)^{2}\norm{\tilde{\boldsymbol{w}}{}^{k}-\boldsymbol{w}^{k}}_{\boldsymbol{\Sigma}_{k}}^{2}\nonumber\\
&=&\gamma(2-\gamma)\norm{\tilde{\boldsymbol{w}}{}^{k}-\boldsymbol{w}^{k}}_{\boldsymbol{\Sigma}_{k}}^{2}.\nonumber
\label{eq45 }
\end{eqnarray}
Combining with (\ref{eq44}), we have
\begin{equation}
\begin{aligned}
\frac{1}{\gamma}(\boldsymbol{w}-\tilde{\boldsymbol{w}}^{k})^{T}\boldsymbol{\Sigma}_{k}(\boldsymbol{w}^{k}-\boldsymbol{w}^{k+1})=\frac{1}{2\gamma}\left( \norm{\boldsymbol{w}-\boldsymbol{w}^{k+1}}_{\boldsymbol{\Sigma}_{k}}^{2}-\norm{\boldsymbol{w}-\boldsymbol{w}^{k}}_{\boldsymbol{\Sigma}_{k}}^{2}\right)+(1-\frac{\gamma}{2})\norm{\tilde{\boldsymbol{w}}{}^{k}-\boldsymbol{w}^{k}}_{\boldsymbol{\Sigma}_{k}}^{2}.\label{eq46}
\end{aligned}
\end{equation}
By replacing the right-hand side of (\ref{eq42}) with (\ref{eq46}),  the assertion of this lemma is proved.
\end{proof}

To analyze the convergence rate, we require an alternative characterization of the solution set for the variational inequality (\ref{eq23}). It can be described as the following theorem.
\begin{lemma}
The solution of the variational inequality (\ref{eq23}) is convex and  it can be characterized as 
\begin{equation}
\Omega^{*}=\underset{\boldsymbol{w}\in \Omega}{\bigcap}\{\tilde{\boldsymbol{w}}\in \Omega: \vartheta(\boldsymbol{w})-\vartheta(\tilde{\boldsymbol{w}})+(\boldsymbol{w}-\tilde{\boldsymbol{w}})^{T}\boldsymbol{\Gamma}(\boldsymbol{w})\ge0\}.\label{eq47}
\end{equation}\label{lemma4}
\end{lemma}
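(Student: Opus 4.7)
The plan is to establish both halves of the lemma by recognizing (\ref{eq47}) as the Minty-style dual reformulation of the monotone variational inequality (\ref{eq23}). Throughout I will read $\vartheta(\boldsymbol{w})$ as $f(\boldsymbol{x})$ with $\boldsymbol{w}=[\boldsymbol{x}^{T},\boldsymbol{\lambda}^{T}]^{T}$, which is consistent with (\ref{eq23}). The argument splits naturally into the inclusion $\Omega^{*}\subseteq$ RHS (which exploits monotonicity of $\boldsymbol{\Gamma}$ from Lemma \ref{lemma2}), the reverse inclusion RHS $\subseteq \Omega^{*}$ (which uses convexity of $f$ and a line-segment limit), and a final convexity observation.

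For the forward inclusion I would take an arbitrary $\boldsymbol{w}^{*}\in\Omega^{*}$ and any $\boldsymbol{w}\in\Omega$. Inequality (\ref{eq23}) gives $f(\boldsymbol{x})-f(\boldsymbol{x}^{*})+(\boldsymbol{w}-\boldsymbol{w}^{*})^{T}\boldsymbol{\Gamma}(\boldsymbol{w}^{*})\ge 0$, and Lemma \ref{lemma2} applied at the pair $(\boldsymbol{w},\boldsymbol{w}^{*})$ yields $(\boldsymbol{w}-\boldsymbol{w}^{*})^{T}[\boldsymbol{\Gamma}(\boldsymbol{w})-\boldsymbol{\Gamma}(\boldsymbol{w}^{*})]\ge 0$. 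Summing the two bounds replaces $\boldsymbol{\Gamma}(\boldsymbol{w}^{*})$ by $\boldsymbol{\Gamma}(\boldsymbol{w})$ and produces exactly the defining inequality of the set indexed by $\boldsymbol{w}$, so $\boldsymbol{w}^{*}$ lies in every member of the intersection.

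For the reverse inclusion, fix $\tilde{\boldsymbol{w}}\in\Omega$ that satisfies $f(\boldsymbol{x})-f(\tilde{\boldsymbol{x}})+(\boldsymbol{w}-\tilde{\boldsymbol{w}})^{T}\boldsymbol{\Gamma}(\boldsymbol{w})\ge 0$ for every $\boldsymbol{w}\in\Omega$. For any $\boldsymbol{w}\in\Omega$ and $\alpha\in(0,1]$, set $\boldsymbol{w}_{\alpha}=(1-\alpha)\tilde{\boldsymbol{w}}+\alpha\boldsymbol{w}\in\Omega$. Applying the standing inequality at $\boldsymbol{w}_{\alpha}$ and using $\boldsymbol{w}_{\alpha}-\tilde{\boldsymbol{w}}=\alpha(\boldsymbol{w}-\tilde{\boldsymbol{w}})$ gives $f(\boldsymbol{x}_{\alpha})-f(\tilde{\boldsymbol{x}})+\alpha(\boldsymbol{w}-\tilde{\boldsymbol{w}})^{T}\boldsymbol{\Gamma}(\boldsymbol{w}_{\alpha})\ge 0$. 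Convexity of $f$ bounds $f(\boldsymbol{x}_{\alpha})-f(\tilde{\boldsymbol{x}})\le\alpha[f(\boldsymbol{x})-f(\tilde{\boldsymbol{x}})]$; dividing by $\alpha$ and letting $\alpha\to 0^{+}$, continuity of $\boldsymbol{\Gamma}$ (which follows from continuous differentiability of each $\phi_{i}$ assumed in Section \ref{section1}) yields $f(\boldsymbol{x})-f(\tilde{\boldsymbol{x}})+(\boldsymbol{w}-\tilde{\boldsymbol{w}})^{T}\boldsymbol{\Gamma}(\tilde{\boldsymbol{w}})\ge 0$, which is (\ref{eq23}) with $\boldsymbol{w}^{*}$ replaced by $\tilde{\boldsymbol{w}}$. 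Hence $\tilde{\boldsymbol{w}}\in\Omega^{*}$.

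For the convexity assertion, I would observe that for each fixed $\boldsymbol{w}\in\Omega$ the map $\tilde{\boldsymbol{w}}\mapsto\vartheta(\boldsymbol{w})-\vartheta(\tilde{\boldsymbol{w}})+(\boldsymbol{w}-\tilde{\boldsymbol{w}})^{T}\boldsymbol{\Gamma}(\boldsymbol{w})$ is concave, since $-f(\tilde{\boldsymbol{x}})$ is concave in $\tilde{\boldsymbol{x}}$ and the remaining term is affine in $\tilde{\boldsymbol{w}}$. Its zero superlevel set intersected with the convex set $\Omega$ is therefore convex, and an arbitrary intersection of convex sets is convex. The only delicate step is the limit passage in the reverse inclusion; that is where continuity of $\boldsymbol{\Gamma}$ (hence the $C^{1}$ hypothesis on $\Phi$) is essential, and it is the place I would flag as the main obstacle, since without it one only recovers a weaker cluster-point statement.
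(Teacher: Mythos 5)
Your proof is correct. Note that the paper itself does not prove Lemma \ref{lemma4}: it simply defers to \cite{H2} (Theorem 2.1). The argument you supply is precisely the standard one behind that citation — the forward inclusion $\Omega^{*}\subseteq\bigcap_{\boldsymbol{w}}\{\cdots\}$ by adding the monotonicity inequality of Lemma \ref{lemma2} to (\ref{eq23}), and the reverse inclusion by the Minty line-segment trick, testing the hypothesis at $\boldsymbol{w}_{\alpha}=(1-\alpha)\tilde{\boldsymbol{w}}+\alpha\boldsymbol{w}$, using convexity of $f$ to get $f(\boldsymbol{x}_{\alpha})-f(\tilde{\boldsymbol{x}})\le\alpha[f(\boldsymbol{x})-f(\tilde{\boldsymbol{x}})]$, dividing by $\alpha$, and passing to the limit. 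Your identification of where continuity of $\boldsymbol{\Gamma}$ is needed is exactly right, and in this paper it is available for free since each $\phi_{i}$ is assumed continuously differentiable, so $\mathcal{D}\Phi$ (hence $\boldsymbol{\Gamma}$) is continuous; the convexity of $\Omega=\mathcal{X}\times\mathcal{Z}$ needed to keep $\boldsymbol{w}_{\alpha}$ feasible also holds. The convexity claim via superlevel sets of the concave map $\tilde{\boldsymbol{w}}\mapsto -\vartheta(\tilde{\boldsymbol{w}})-\tilde{\boldsymbol{w}}^{T}\boldsymbol{\Gamma}(\boldsymbol{w})$ intersected over $\boldsymbol{w}\in\Omega$ is likewise sound. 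In short, you have made explicit and self-contained what the paper outsources; there is no gap.
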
\vspace{-0.4cm}
\noindent The proof can be found in \cite{H2} (Theorem 2.1). For (\ref{eq47}), in this paper, we have 
\begin{equation}
 \boldsymbol{w}=\left( \begin{array}{c} \boldsymbol{x} \\
      \boldsymbol{\lambda} \end{array} \right), \ \vartheta(\boldsymbol{w}) = f(\boldsymbol{x}) \mbox{ and }
 \boldsymbol{\Gamma}(\boldsymbol{w})=\left( \begin{array}{c} \mathcal{D} \Phi (\boldsymbol{x})^{T}\boldsymbol{\lambda} \\
        -\Phi (\boldsymbol{x}) \end{array} \right).\label{eq48 }
\end{equation}
By Lemma \ref{lemma2}, for any $ \boldsymbol{w}, \tilde{\boldsymbol{w}}\in \Omega$, the monotone operator can be written as 
\begin{equation}
(\boldsymbol{w}-\tilde{\boldsymbol{w}})^{T}\boldsymbol{\Gamma}(\boldsymbol{w})\ge(\boldsymbol{w}-\tilde{\boldsymbol{w}})^{T}\boldsymbol{\Gamma}(\tilde{\boldsymbol{w}}).\label{eq49}
\end{equation}
Setting $\tilde{\boldsymbol{w}}=\boldsymbol{w}^{*}$ in (\ref{eq49}), we can rewrite variational inequality (\ref{eq68}) in the following equivalent form:
\begin{equation}
\boldsymbol{w}^{*}\in \Omega, \quad f(\boldsymbol{x})- f(\boldsymbol{x}^{*})+(\boldsymbol{w}-\boldsymbol{w}^{*})^{T}\boldsymbol{\Gamma}(\boldsymbol{w})\ge0, \quad\forall \ \boldsymbol{w}\in \Omega. \label{eq50}
\end{equation}
According to the equivalent characterization (\ref{eq50}), for given $\epsilon >0$, $\tilde{\boldsymbol{w}}\in \Omega$ is considered an approximate solution of variational inequality (\ref{eq23}), if it satisfies 
\begin{equation}
 f(\boldsymbol{x})-f(\tilde{\boldsymbol{x}})+(\boldsymbol{w}-\tilde{\boldsymbol{w}})^{T}\boldsymbol{\Gamma}(\boldsymbol{w})\ge-\epsilon, \quad\forall \ \boldsymbol{w}\in \mathsf{N}_{r}(\tilde{\boldsymbol{w}}),\label{eq51 }
\end{equation}
where $\mathsf{N}_{r}(\tilde{\boldsymbol{w}})=\{\boldsymbol{w}\mid \Vert\boldsymbol{w}-\tilde{\boldsymbol{w}}\Vert\leq 1\}$ is the unit neighborhood of $\tilde{\boldsymbol{w}}$. For the given $\epsilon>0$, after  $t$ iterations, it can offer a $\tilde{\boldsymbol{w}}\in \Omega$, such that  
\begin{equation}
\sup_{\boldsymbol{w}\in\mathsf{N}_{r}(\tilde{\boldsymbol{w}})} \{f(\tilde{\boldsymbol{x}})-f(\boldsymbol{x})+(\tilde{\boldsymbol{w}}-\boldsymbol{w})^{T}\boldsymbol{\Gamma}(\boldsymbol{w})\}\leq\epsilon. \label{eq52}
\end{equation}
Referring to the monotone operator $\boldsymbol{\Gamma}(\boldsymbol{w})$, formula (\ref{eq41}) can be rewritten as 
\begin{equation}
\begin{aligned}
f(\boldsymbol{x})-f(\tilde{\boldsymbol{x}}{}^{k})+(\boldsymbol{w}-\tilde{\boldsymbol{w}}{}^{k})\boldsymbol{\Gamma}(\boldsymbol{w})+\frac{1}{2\gamma}\left(\Vert \boldsymbol{w}-\boldsymbol{w}^{k} \Vert_{\boldsymbol{\Sigma}_{k}}^{2}-\Vert \boldsymbol{w}-\boldsymbol{w}^{k+1} \Vert_{\boldsymbol{\Sigma}_{k}}^{2}\right)\ge0, \quad \forall \ \boldsymbol{w}\in \Omega. \end{aligned}\label{eq53}
\end{equation}

\begin{lemma}
Given a closed convex set $\mathfrak{X}=\{\boldsymbol{x}\mid\norm{\boldsymbol{x}}<\infty,\boldsymbol{x}\in \mathcal{X}\}$, function $\mathcal{D} \Phi (\boldsymbol{x})$ is bounded on the set $\mathfrak{X}$ if  the following inequality is satisfied.
\begin{equation} \left\{ \begin{array}{ll}
\Vert \mathcal{D} \Phi (\boldsymbol{x})\Vert^{2}\leq L\norm{\boldsymbol{x}}^{2}\leq L C_{1}, &\ \forall \ \boldsymbol{x}\in \mathfrak{X},\\
\Vert \mathcal{D} \Phi (\boldsymbol{x}')-\mathcal{D} \Phi (\boldsymbol{x}'')\Vert^{2}\leq L\norm{\boldsymbol{x}'-\boldsymbol{x}''}^{2}\leq LC_{2}, &\ \forall \ \boldsymbol{x}',\boldsymbol{x}''\in \mathfrak{X},
\end{array}\right.\label{eq54}\end{equation}
where the constant $L$ belongs to the interval $(0, \infty)$, $C_{1}=\max_{\boldsymbol{x}} \norm{\boldsymbol{x}}^{2}$ and $C_{2}=\max_{\boldsymbol{x}'-\boldsymbol{x}''}\norm{\boldsymbol{x}'-\boldsymbol{x}''}^{2}$. If the sequence $\{\tilde{\boldsymbol{x}}{}^{k}\mid k=0,1,\cdots,t\}$ generated by a given method belongs to the set $\mathfrak{X}$, for $k=1,\cdots,t$, there exist a set of regularization factors $\{r_{k},s_{k}\}$ that satisfies $\boldsymbol{D}_{k}=\boldsymbol{\Sigma}_{k-1}-\boldsymbol{\Sigma}_{k}\succeq0$.\label{lemma55}
\end{lemma}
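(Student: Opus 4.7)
The first assertion is essentially a restatement of the hypothesis: since $\mathfrak{X}$ is bounded in norm, $C_1=\max_{\boldsymbol{x}\in\mathfrak{X}}\|\boldsymbol{x}\|^{2}<\infty$, and (\ref{eq54}) then gives $\|\mathcal{D}\Phi(\boldsymbol{x})\|^{2}\le LC_{1}$ for every $\boldsymbol{x}\in\mathfrak{X}$. I would dispose of this in one line. The substantive part is the existence of $\{r_{k},s_{k}\}$ making $\boldsymbol{D}_{k}=\boldsymbol{\Sigma}_{k-1}-\boldsymbol{\Sigma}_{k}\succeq 0$; all subsequent work goes into that.

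My approach is to write $\boldsymbol{D}_{k}$ as a $2\times 2$ block matrix and apply the Schur complement criterion. Setting $\Delta r_{k}=r_{k-1}-r_{k}$, $\Delta s_{k}=s_{k-1}-s_{k}$, and $\boldsymbol{E}_{k}=\mathcal{D}\Phi(\tilde{\boldsymbol{x}}^{k})-\mathcal{D}\Phi(\tilde{\boldsymbol{x}}^{k-1})$, a direct subtraction gives
\begin{equation}
\boldsymbol{D}_{k}=\begin{pmatrix}\Delta r_{k}\boldsymbol{I}_{n} & \boldsymbol{E}_{k}^{T}\\ \boldsymbol{E}_{k} & \Delta s_{k}\boldsymbol{I}_{m}\end{pmatrix}.\nonumber
\end{equation}
Assuming $\Delta r_{k}>0$, the Schur complement of the $(1,1)$ block is $\Delta s_{k}\boldsymbol{I}_{m}-(1/\Delta r_{k})\boldsymbol{E}_{k}\boldsymbol{E}_{k}^{T}$, so $\boldsymbol{D}_{k}\succeq 0$ is equivalent to
\begin{equation}
\Delta r_{k}\,\Delta s_{k}\;\ge\;\rho\bigl(\boldsymbol{E}_{k}\boldsymbol{E}_{k}^{T}\bigr)=\|\boldsymbol{E}_{k}\|^{2}.\nonumber
\end{equation}
The second inequality in (\ref{eq54}) gives the uniform bound $\|\boldsymbol{E}_{k}\|^{2}\le LC_{2}$, so it suffices to choose the decrements $\Delta r_{k},\Delta s_{k}>0$ with $\Delta r_{k}\Delta s_{k}\ge LC_{2}$. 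A concrete admissible choice is $\Delta r_{k}=\Delta s_{k}=\sqrt{LC_{2}}$ for every $k$, or more symmetrically $r_{k}=r_{0}-k\sqrt{LC_{2}}$ and $s_{k}=s_{0}-k\sqrt{LC_{2}}$.

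I would then verify the compatibility with the standing requirements already imposed on $\boldsymbol{\Sigma}_{k}$ itself, namely $r_{k},s_{k}>0$ and $r_{k}s_{k}>\rho(\mathcal{D}\Phi(\tilde{\boldsymbol{x}}^{k})\mathcal{D}\Phi(\tilde{\boldsymbol{x}}^{k})^{T})$ from (\ref{eq30}). Since the latter spectral radius is itself bounded by $LC_{1}$, picking the initial values $r_{0}$ and $s_{0}$ large enough—e.g.\ $r_{0},s_{0}\ge t\sqrt{LC_{2}}+\sqrt{LC_{1}}+1$—keeps $r_{k}s_{k}$ strictly above $LC_{1}\ge \rho(\mathcal{D}\Phi\mathcal{D}\Phi^{T})$ throughout the $t$ iterations, so every $\boldsymbol{\Sigma}_{k}$ is positive definite and every $\boldsymbol{D}_{k}$ positive semidefinite. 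The main obstacle I anticipate is this bookkeeping: one has to satisfy two spectral conditions simultaneously (PSD of the telescoped difference and PD of each $\boldsymbol{\Sigma}_{k}$) over a finite horizon without letting $r_{k}$ or $s_{k}$ degenerate, which is exactly why the uniform Lipschitz-type hypothesis (\ref{eq54}) is indispensable.
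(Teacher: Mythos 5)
Your proposal is correct and follows essentially the same route as the paper: both reduce $\boldsymbol{D}_{k}\succeq 0$ to the block condition $(r_{k-1}-r_{k})(s_{k-1}-s_{k})\ge\Vert\mathcal{D}\Phi(\tilde{\boldsymbol{x}}^{k})-\mathcal{D}\Phi(\tilde{\boldsymbol{x}}^{k-1})\Vert^{2}$, bound the right-hand side by $LC_{2}$ using (\ref{eq54}), and exhibit an arithmetically decreasing sequence with decrement on the order of $\sqrt{LC_{2}}$ (the paper takes $r_{k}=(t-k)\sqrt{LC_{2}+\sigma}+LC_{1}+\sigma$). Your version merely makes explicit the Schur-complement justification and the check that $r_{k}s_{k}$ stays above $\rho(\mathcal{D}\Phi(\tilde{\boldsymbol{x}}^{k})\mathcal{D}\Phi(\tilde{\boldsymbol{x}}^{k})^{T})$, details the paper leaves implicit.
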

\begin{proof}
The difference matrix $\boldsymbol{D}_{k}$ is written as 
\begin{equation}\boldsymbol{D}_{k}=\boldsymbol{\Sigma}_{k-1}-\boldsymbol{\Sigma}_{k}=\left(
\begin{array}{cc}
(r_{k-1}-r_{k})\boldsymbol{I}_{n} & (\mathcal{D} \Phi (\tilde{\boldsymbol{x}}^{k})-\mathcal{D} \Phi (\tilde{\boldsymbol{x}}^{k-1}))^{T} \\
\mathcal{D} \Phi (\tilde{\boldsymbol{x}}^{k})-\mathcal{D} \Phi (\tilde{\boldsymbol{x}}^{k-1})&(s_{k-1}-s_{k})\boldsymbol{I}_{m} 
\end{array}
\right).\label{eq55 }
\end{equation}
If $\boldsymbol{D}_{k}$ is positive definite matrix, the following condition should be satisfied.
\begin{equation} \left\{ \begin{array}{lrl}
0<r_{k}\leq r_{k-1}, 0<s_{k}\leq s_{k-1},\\
r_{k}s_{k}\leq r_{k-1}s_{k-1}< L\max_{\boldsymbol{x}} \norm{\boldsymbol{x}}^{2},\\
(r_{k-1}-r_{k})(s_{k-1}-s_{k})> \Vert \mathcal{D} \Phi (\tilde{\boldsymbol{x}}^{k})-\mathcal{D} \Phi (\tilde{\boldsymbol{x}}^{k-1})\Vert^{2}.
\end{array}\right.\label{eq56}
\end{equation}
It is obvious that the equal sign is taken when 
 $\mathcal{D} \Phi (\tilde{\boldsymbol{x}}^{k})=\mathcal{D} \Phi (\tilde{\boldsymbol{x}}^{k-1})$ holds.
In other cases, we need to design a monotonically decreasing sequence that fulfills condition (\ref{eq56}). Since the values of $\{r_{k},s_{k}\}$ can be chosen arbitrarily from the interval $(0,\infty)$, such a sequence exists. For example, we take 
\begin{equation}
r_{t}=s_{t}=LC_{1}+\sigma,\label{eq57 }
\end{equation}
where $\sigma>0$ and let $r_{t-1}=s_{t-1}$. Referring to (\ref{eq56}), we have 
\begin{equation}
(r_{t-1}-r_{t})^{2}= LC_{2}+\sigma, \quad r_{t-1}=r_{t}+\sqrt{LC_{2}+\sigma}.\label{eq58 }
\end{equation}
Then the sequence can be expressed by
\begin{equation}
r_{k}=(t-k)\sqrt{LC_{2}+\sigma}+LC_{1}+\sigma, k=0,1,\cdots,t.\label{eq59 }
\end{equation}
Thus, this lemma is proved.
\end{proof}

\begin{theorem}
Let problem (\ref{eq36}) be solved by the relaxed PPA, and the generated sequence $\{\tilde{\boldsymbol{x}}^{k}\}$ follows condition (\ref{eq54}). Let $\tilde{\boldsymbol{x}}_{t}$ and $\tilde{\boldsymbol{w}}_{t}$ be defined as follows. 
\begin{equation}
\tilde{\boldsymbol{x}}_{t}=\frac{1}{1+t}\sum_{k=0}^{t}\tilde{\boldsymbol{x}}^{k},\quad \tilde{\boldsymbol{w}}_{t}=\frac{1}{1+t}\sum_{k=0}^{t}\tilde{\boldsymbol{w}}^{k}.\label{eq60 }
\end{equation}
If the regularization factors satisfy Lemma \ref{lemma55}, for any integer number $t>0$, we have $\boldsymbol{D}_{k}=\boldsymbol{\Sigma}_{k-1}-\boldsymbol{\Sigma}_{k}\succ0$ and
\begin{equation}
\tilde{\boldsymbol{w}}_{t}\in \Omega,\quad f(\tilde{\boldsymbol{x}}_{t})-f(\boldsymbol{x})+(\tilde{\boldsymbol{w}}_{t}-\boldsymbol{w})^{T}\boldsymbol{\Gamma}(\boldsymbol{w})\leq \frac{1}{2\gamma(1+t)}\Vert\boldsymbol{w}-\boldsymbol{w}^{0}\Vert_{\boldsymbol{\Sigma}_{0}}^{2}, \quad\forall \ \boldsymbol{w}\in \Omega. \label{eq61}
\end{equation}
\end{theorem}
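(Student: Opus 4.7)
The plan is to use the standard averaging-plus-telescoping argument for variational-inequality convergence rates, starting from (\ref{eq53}) and closing the gap created by the varying proximal metric with Lemma~\ref{lemma55}. First I would sum (\ref{eq53}) over $k=0,1,\ldots,t$; by linearity the left-hand side becomes
\begin{equation*}
(1+t)f(\boldsymbol{x}) - \sum_{k=0}^{t} f(\tilde{\boldsymbol{x}}^{k}) + \Bigl((1+t)\boldsymbol{w}-\sum_{k=0}^{t}\tilde{\boldsymbol{w}}^{k}\Bigr)^{T}\boldsymbol{\Gamma}(\boldsymbol{w}),
\end{equation*}
while the right-hand side becomes $\frac{1}{2\gamma}\sum_{k=0}^{t}\bigl(\norm{\boldsymbol{w}-\boldsymbol{w}^{k+1}}^{2}_{\boldsymbol{\Sigma}_{k}} - \norm{\boldsymbol{w}-\boldsymbol{w}^{k}}^{2}_{\boldsymbol{\Sigma}_{k}}\bigr)$.

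Next I would handle this metric-weighted sum. If $\boldsymbol{\Sigma}_{k}$ were constant it would telescope to $\norm{\boldsymbol{w}-\boldsymbol{w}^{t+1}}^{2}-\norm{\boldsymbol{w}-\boldsymbol{w}^{0}}^{2}$. In the varying case I would regroup it as
\begin{equation*}
-\norm{\boldsymbol{w}-\boldsymbol{w}^{0}}^{2}_{\boldsymbol{\Sigma}_{0}} + \sum_{k=1}^{t}\norm{\boldsymbol{w}-\boldsymbol{w}^{k}}^{2}_{\boldsymbol{D}_{k}} + \norm{\boldsymbol{w}-\boldsymbol{w}^{t+1}}^{2}_{\boldsymbol{\Sigma}_{t}},
\end{equation*}
where $\boldsymbol{D}_{k}=\boldsymbol{\Sigma}_{k-1}-\boldsymbol{\Sigma}_{k}$. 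Lemma~\ref{lemma55} supplies $\boldsymbol{D}_{k}\succeq 0$, and the terminal term is manifestly nonnegative, so the whole quantity is bounded below by $-\norm{\boldsymbol{w}-\boldsymbol{w}^{0}}^{2}_{\boldsymbol{\Sigma}_{0}}$. Dividing the accumulated inequality by $(1+t)$ and identifying $\tilde{\boldsymbol{w}}_{t}$ then yields
\begin{equation*}
f(\boldsymbol{x}) - \frac{1}{1+t}\sum_{k=0}^{t} f(\tilde{\boldsymbol{x}}^{k}) + (\boldsymbol{w}-\tilde{\boldsymbol{w}}_{t})^{T}\boldsymbol{\Gamma}(\boldsymbol{w}) \ge -\frac{1}{2\gamma(1+t)}\norm{\boldsymbol{w}-\boldsymbol{w}^{0}}^{2}_{\boldsymbol{\Sigma}_{0}}.
\end{equation*}

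Finally I would apply Jensen's inequality: convexity of $f$ together with the definition of $\tilde{\boldsymbol{x}}_{t}$ gives $f(\tilde{\boldsymbol{x}}_{t}) \le \frac{1}{1+t}\sum_{k=0}^{t} f(\tilde{\boldsymbol{x}}^{k})$, so substituting $f(\tilde{\boldsymbol{x}}_{t})$ for the average only strengthens the left-hand side. A sign flip and rearrangement then produces the target inequality (\ref{eq61}); membership $\tilde{\boldsymbol{w}}_{t}\in\Omega$ is immediate since each $\tilde{\boldsymbol{w}}^{k}\in\Omega$ and $\Omega=\mathcal{X}\times\mathcal{Z}$ is convex. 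The main obstacle is exactly the varying-metric telescoping: consecutive summands weight $\boldsymbol{w}-\boldsymbol{w}^{k}$ by different matrices, so the usual cancellation fails and the cross-term $\norm{\boldsymbol{w}-\boldsymbol{w}^{k}}^{2}_{\boldsymbol{\Sigma}_{k-1}} - \norm{\boldsymbol{w}-\boldsymbol{w}^{k}}^{2}_{\boldsymbol{\Sigma}_{k}}$ could a priori have either sign. Lemma~\ref{lemma55} is engineered precisely to force it to be nonnegative, which is what permits the $O(1/t)$ rate to survive the change of metric; everything else is linearity, Jensen, and bookkeeping.
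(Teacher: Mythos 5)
Your proof is correct and follows essentially the same route as the paper: sum (\ref{eq53}) over $k$, regroup the metric-weighted telescoping sum so that the mismatch terms appear as $\norm{\boldsymbol{w}-\boldsymbol{w}^{k}}^{2}_{\boldsymbol{D}_{k}}$ and are discarded via Lemma \ref{lemma55}, then divide by $1+t$ and apply Jensen to $f$. The only (cosmetic) difference is that you retain and explicitly bound the terminal term $\norm{\boldsymbol{w}-\boldsymbol{w}^{t+1}}^{2}_{\boldsymbol{\Sigma}_{t}}\ge 0$, which the paper silently drops.
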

\begin{proof}
For all $k=0, 1,\cdots,t$, $\tilde{\boldsymbol{w}}^{k}$ belongs to the convex set $\Omega$. Then the linear combination $\tilde{\boldsymbol{w}}_{t}\in \Omega$ holds.  Summing inequality (\ref{eq53}) over $k=0,1,\cdots,t$, for any $ \boldsymbol{w}\in \Omega$, we get 
\begin{equation}
(1+t)f(\boldsymbol{x})-\sum_{k=0}^{t}f(\tilde{\boldsymbol{x}}{}^{k})+\left((1+t)\boldsymbol{w}-\sum_{k=0}^{t}\tilde{\boldsymbol{w}}{}^{k}\right)\boldsymbol{\Gamma}(\boldsymbol{w})+\frac{1}{2\gamma}\sum_{k=0}^{t}\left(\Vert \boldsymbol{w}-\boldsymbol{w}^{k} \Vert_{\boldsymbol{\Sigma}_{k}}^{2}-\Vert \boldsymbol{w}-\boldsymbol{w}^{k+1} \Vert_{\boldsymbol{\Sigma}_{k}}^{2}\right)\ge 0.
\end{equation}
The inequality above divided by $1+t$ equals 
\begin{equation}
\begin{aligned}
f(\boldsymbol{x})-\frac{1}{1+t}\sum_{k=0}^{t}f(\tilde{\boldsymbol{x}}{}^{k})+\left(\boldsymbol{w}-\frac{1}{1+t}\sum_{k=0}^{t}\tilde{\boldsymbol{w}}{}^{k}\right)\boldsymbol{\Gamma}(\boldsymbol{w})+\frac{1}{2\gamma (1+t)}\sum_{k=0}^{t}\left(\Vert \boldsymbol{w}-\boldsymbol{w}^{k} \Vert_{\boldsymbol{\Sigma}_{k}}^{2}-\Vert \boldsymbol{w}-\boldsymbol{w}^{k+1} \Vert_{\boldsymbol{\Sigma}_{k}}^{2}\right)\ge 0.
\end{aligned}
\end{equation}
We know that $f(\boldsymbol{x})$ is convex in $\mathcal{X}$ and $\tilde{\boldsymbol{x}}_{t}=\frac{1}{1+t}\sum_{k=0}^{t}\tilde{\boldsymbol{x}}^{k}\in \mathcal{X}$. So  $f(\tilde{\boldsymbol{x}}_{t})\leq\frac{1}{1+t}\sum_{k=0}^{t}f(\tilde{\boldsymbol{x}}^{k})$ holds.
\begin{equation}
\begin{aligned}
 f(\boldsymbol{x})-f(\tilde{\boldsymbol{x}}_{t})+\left(\boldsymbol{w}-\tilde{\boldsymbol{w}}_{t}\right)\boldsymbol{\Gamma}(\boldsymbol{w})+\frac{1}{2\gamma (1+t)}\sum_{k=0}^{t}\left(\Vert \boldsymbol{w}-\boldsymbol{w}^{k} \Vert_{\boldsymbol{\Sigma}_{k}}^{2}-\Vert \boldsymbol{w}-\boldsymbol{w}^{k+1} \Vert_{\boldsymbol{\Sigma}_{k}}^{2}\right)&\ge 0, \\ f(\boldsymbol{x})-f(\tilde{\boldsymbol{x}}_{t})+\left(\boldsymbol{w}-\tilde{\boldsymbol{w}}_{t}\right)\boldsymbol{\Gamma}(\boldsymbol{w})+\frac{1}{2\gamma (1+t)}\left(\Vert \boldsymbol{w}-\boldsymbol{w}^{0} \Vert_{\boldsymbol{\Sigma}_{0}}^{2}-\sum_{k=1}^{t}(\boldsymbol{w}-\boldsymbol{w}^{k})^{T}\boldsymbol{D}_{k}(\boldsymbol{w}-\boldsymbol{w}^{k})\right)&\ge 0, \\
\end{aligned}
\end{equation}
where $\boldsymbol{D}_{k}=\boldsymbol{\Sigma}_{k-1}-\boldsymbol{\Sigma}_{k}$. Since $\boldsymbol{D}_{k}\succ0$, $\Vert\boldsymbol{w}-\boldsymbol{w}^{k}\Vert_{\boldsymbol{D}_{k}}\ge0$ holds. Then we have 
\begin{equation}
\begin{aligned}
f(\boldsymbol{x})-f(\tilde{\boldsymbol{x}}_{t})+\left(\boldsymbol{w}-\tilde{\boldsymbol{w}}_{t}\right)\boldsymbol{\Gamma}(\boldsymbol{w})+\frac{1}{2\gamma (1+t)}\Vert \boldsymbol{w}-\boldsymbol{w}^{0} \Vert_{\boldsymbol{\Sigma}_{0}}^{2}&\ge \frac{1}{2\gamma (1+t)}\sum_{k=1}^{t}\Vert\boldsymbol{w}-\boldsymbol{w}^{k}\Vert_{\boldsymbol{D}_{k}}, \\
f(\boldsymbol{x})-f(\tilde{\boldsymbol{x}}_{t})+\left(\boldsymbol{w}-\tilde{\boldsymbol{w}}_{t}\right)\boldsymbol{\Gamma}(\boldsymbol{w})+\frac{1}{2\gamma (1+t)}\Vert \boldsymbol{w}-\boldsymbol{w}^{0} \Vert_{\boldsymbol{\Sigma}_{0}}^{2}&\ge 0.\\
\end{aligned}\label{eq63}
\end{equation}
Thus, this theorem holds.
\end{proof}

\begin{remark}
When all constraints are linear, indicating that  $\mathcal{D} \Phi (\tilde{\boldsymbol{x}}^{k})=\mathcal{D} \Phi (\tilde{\boldsymbol{x}}^{k-1})$ holds for $k=1,\cdots,t$, the difference matrix $\boldsymbol{D}_{k}$ becomes 
\begin{equation}\boldsymbol{D}_{k}=\left(
\begin{array}{cc}
(r_{k-1}-r_{k})\boldsymbol{I}_{n} & 0 \\
0&(s_{k-1}-s_{k})\boldsymbol{I}_{m} 
\end{array}
\right).\label{eq64 }
\end{equation}
If $r_{k}$ and $s_{k}$ are taken as two constants, we get $\boldsymbol{D}_{k}=0$. Therefore, for linear constrained convex problems, the relaxed PPA can produce an approximate solution with an accuracy of $O(1/t)$ after $t$ iterations when we take $r_{k}=r$,  $s_{k}=s$ and $rs>\Vert\mathcal{D} \Phi (\tilde{\boldsymbol{x}}^{k})\Vert^{2}$.
\end{remark}

\begin{algorithm}[t]\label{algorithm1}
\caption{Relaxed PPA method}
\LinesNumbered
\KwIn{ Parameters $\mu_{1}>1,\mu_{2}>1,\gamma\in(0, 2)$; tolerant error $\tau$.}
\KwOut{The optimal solution: $\boldsymbol{x}^{*}$.}
Initialize $\boldsymbol{x}^{0},\boldsymbol{\lambda}^{0},k=0$\;
\While{$error\ge\tau$}{
\emph{\% The customized PPA step:}\\
$r_{k}=\frac{1}{\mu_{1}}\Vert\mathcal{D} \Phi (\boldsymbol{x}^{k})\Vert$\;
$\tilde{\boldsymbol{x}}^{k}=\arg\min\{\mathcal{L}''(\boldsymbol{x},\boldsymbol{\lambda}^{k})+\frac{r_{k}}{2}\norm{\boldsymbol{x}-\boldsymbol{x}^{k}}^{2}\mid \boldsymbol{x}\in \mathcal{X}\}$\;
$s_{k}=\frac{\mu_{2}}{r_{k}}\Vert\mathcal{D} \Phi (\tilde{\boldsymbol{x}}^{k})\Vert^{2}$\;
$\tilde{\boldsymbol{\lambda}}^{k}=P_{\mathcal{Z}}\left(\boldsymbol{\lambda}^{k}+\frac{1}{s_{k}}[  \Phi (\tilde{\boldsymbol{x}}^{k}) +\mathcal{D} \Phi (\tilde{\boldsymbol{x}}^{k})(\tilde{\boldsymbol{x}}^{k}-\boldsymbol{x}^{k})]\right)$\;
\emph{\% The relaxed step:}\\
$\boldsymbol{w}^{k+1}=\boldsymbol{w}^{k}-\gamma(\boldsymbol{w}^{k}-\tilde{\boldsymbol{w}}^{k})$\;
$error=\mbox{abs}[ f(\boldsymbol{x}^{k})- f(\boldsymbol{x}^{k+1})]$\;
$k=k+1$\;
} 
\Return { $\boldsymbol{x}^{*}=\boldsymbol{x}^{k}$}\;
 \end{algorithm}

\subsection{Regularization Parameter Selection}
Based on the convergence analysis, it is imperative that the proximal matrix be symmetrical and positive definite. However, achieving the convergence rate of $O(1/t)$ is not easy due to the parameter selection of ${r_{k},s_{k}}$ relying on the unknown constant $L$. In general, we adjust the values of $\{r_{k},s_{k}\}$ for the $k$-th iteration using the following formulae.
 \begin{equation}
r_{k}=\frac{1}{\mu_{1}}\Vert\mathcal{D} \Phi (\boldsymbol{x}^{k})\Vert,\label{eq65}
\end{equation}
\begin{equation}
s_{k}=\frac{\mu_{2}}{r_{k}}\Vert\mathcal{D} \Phi (\tilde{\boldsymbol{x}}^{k})\Vert^{2},\label{eq66}
\end{equation}
where $\mu_{1}>1$ and $\mu_{2}>1$ are two positive factor parameters. 
Based on the truth that $\rho(\mathcal{D} \Phi (\tilde{\boldsymbol{x}}{}^{k})\mathcal{D} \Phi (\tilde{\boldsymbol{x}}{}^{k})^{T})\leq\Vert\mathcal{D} \Phi (\tilde{\boldsymbol{x}}^{k})\Vert^{2}$, we multiply equations (\ref{eq65}) and (\ref{eq66}) to obtain the result. 
\begin{equation}r_{k}s_{k}=\mu_{2}\Vert\mathcal{D} \Phi (\tilde{\boldsymbol{x}}^{k})\Vert^{2}>\Vert\mathcal{D} \Phi (\tilde{\boldsymbol{x}}^{k})\Vert^{2}.\label{eq67}\end{equation} 
The above inequality ensures that the symmetric proximal matrix $\boldsymbol{\Sigma}_{k}$ is positive definite. The pseudo-code of the relaxed PPA can be found in Algorithm \ref{algorithm1}.

\section{Prediction Correction Method}\label{section4}
As described in Section \ref{section2}, Lagrangian function (\ref{eq19}) results in a variational inequality:
\begin{equation}
\boldsymbol{w}^{*}\in \Omega, \quad f(\boldsymbol{x})- f(\boldsymbol{x}^{*})+(\boldsymbol{w}-\boldsymbol{w}^{*})^{T}\boldsymbol{\Gamma}(\boldsymbol{w}^{*})\ge0, \quad\forall \ \boldsymbol{w}\in \Omega. \label{eq68}
\end{equation}
The PC method consists of two steps: the prediction step and the correction step. In the prediction step, we take the output of the primal-dual method, $ \tilde{\boldsymbol{w}}^{k}=( \tilde{\boldsymbol{x}}^{k}, \tilde{\boldsymbol{\lambda}}^{k})$, as the predictive variables by solving (\ref{eq25}). From (\ref{eq28}), the variational inequality can be written as 
\begin{equation}
 \tilde{\boldsymbol{w}}^{k}\in \Omega, \quad f(\boldsymbol{x})- f(\tilde{\boldsymbol{x}}^{k})+(\boldsymbol{w}-\tilde{\boldsymbol{w}}^{k})^{T}\boldsymbol{\Gamma}(\tilde{\boldsymbol{w}}^{k})\ge(\boldsymbol{w}-\tilde{\boldsymbol{w}}^{k})^{T}\boldsymbol{Q}_{k}(\boldsymbol{w}^{k}-\tilde{\boldsymbol{w}}^{k}), \quad\forall \ \boldsymbol{w}\in \Omega, \label{eq69}
\end{equation}
where $\boldsymbol{Q}_{k}$ is called as the predictive matrix. When condition (\ref{eq67}) holds, the symmetrical matrix
\begin{equation} \boldsymbol{Q}_{k}^{T}+\boldsymbol{Q}_{k}=\left(
\begin{array}{cc}
2r_{k}\boldsymbol{I}_{n} & -\mathcal{D} \Phi (\tilde{\boldsymbol{x}}^{k})^{T} \\
-\mathcal{D} \Phi (\tilde{\boldsymbol{x}}^{k})&2s_{k}\boldsymbol{I}_{m} 
\end{array}
\right)\succ0 \mbox{ is positive definite.}\nonumber
\end{equation}
In the correction step, we use a corrective matrix $\boldsymbol{M}$ to correct the predictive variables by the following equation.
\begin{equation}
\boldsymbol{w}^{k+1}=\boldsymbol{w}^{k}-\beta\boldsymbol{M}_{k}(\boldsymbol{w}^{k}-\tilde{\boldsymbol{w}}^{k}),\label{eq70}
\end{equation}
where $\beta>0$ is the correction step size and the corrective matrix $\boldsymbol{M}$ is not unique \cite{E4}.

\subsection{Convergence Analysis for the PC method}
\begin{theorem}
Let $\{\boldsymbol{w}^{k},\tilde{\boldsymbol{w}}^{k},\boldsymbol{w}^{k+1}\}$ be generated by the PC method. For the predictive matrix $\boldsymbol{Q}_{k}$, if there is a corrective matrix $\boldsymbol{M}$ that satisfies the following convergence condition.
\begin{equation}
\boldsymbol{\Sigma}_{k}=\boldsymbol{Q}_{k}\boldsymbol{M}_{k}^{-1}\succ0\mbox{ and }
\boldsymbol{G}_{k}=\boldsymbol{Q}_{k}^{T}+\boldsymbol{Q}_{k}-\beta\boldsymbol{M}_{k}^{T}\boldsymbol{\Sigma}_{k}\boldsymbol{M}_{k}\succ0,\label{eq71}
\end{equation}
 then we have 
\begin{equation}
 \beta[ f(\boldsymbol{x})- f(\tilde{\boldsymbol{x}}^{k})+(\boldsymbol{w}-\tilde{\boldsymbol{w}}^{k})^{T}\boldsymbol{\Gamma}(\tilde{\boldsymbol{w}}^{k})]\ge\frac{1}{2}\left(\norm{\boldsymbol{w}-\boldsymbol{w}^{k+1}}_{\boldsymbol{\Sigma}_{k}}^{2}-\norm{\boldsymbol{w}-\boldsymbol{w}^{k}}_{\boldsymbol{\Sigma}_{k}}^{2}\right) +\frac{\beta}{2}\norm{\tilde{\boldsymbol{w}}{}^{k}-\boldsymbol{w}^{k}}_{\boldsymbol{G}_{k}}^{2},\quad \forall \ \boldsymbol{w}\in \Omega. \label{eq72}
\end{equation}
\end{theorem}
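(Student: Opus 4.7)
The plan is to start from the prediction variational inequality (\ref{eq69}), multiply it through by $\beta$, and then reshape its right-hand side into the $\boldsymbol{\Sigma}_{k}$-telescope plus $\boldsymbol{G}_{k}$-quadratic appearing in (\ref{eq72}). Two identifications drive the whole argument: the factorization $\boldsymbol{Q}_{k}=\boldsymbol{\Sigma}_{k}\boldsymbol{M}_{k}$ (an immediate consequence of $\boldsymbol{\Sigma}_{k}=\boldsymbol{Q}_{k}\boldsymbol{M}_{k}^{-1}$) and the correction rule $\boldsymbol{w}^{k}-\boldsymbol{w}^{k+1}=\beta\boldsymbol{M}_{k}(\boldsymbol{w}^{k}-\tilde{\boldsymbol{w}}^{k})$ from (\ref{eq70}). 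Combined, they give $\beta\boldsymbol{Q}_{k}(\boldsymbol{w}^{k}-\tilde{\boldsymbol{w}}^{k})=\boldsymbol{\Sigma}_{k}(\boldsymbol{w}^{k}-\boldsymbol{w}^{k+1})$, turning the lower bound in (\ref{eq69}) (after multiplication by $\beta$) into $(\boldsymbol{w}-\tilde{\boldsymbol{w}}^{k})^{T}\boldsymbol{\Sigma}_{k}(\boldsymbol{w}^{k}-\boldsymbol{w}^{k+1})$.

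Next I decompose $\boldsymbol{w}-\tilde{\boldsymbol{w}}^{k}=(\boldsymbol{w}-\boldsymbol{w}^{k})+(\boldsymbol{w}^{k}-\tilde{\boldsymbol{w}}^{k})$ and process the two pieces separately. For $(\boldsymbol{w}-\boldsymbol{w}^{k})^{T}\boldsymbol{\Sigma}_{k}(\boldsymbol{w}^{k}-\boldsymbol{w}^{k+1})$, the polarization identity (\ref{eq43}) with $\boldsymbol{a}=\boldsymbol{w}$, $\boldsymbol{b}=\boldsymbol{c}=\boldsymbol{w}^{k}$, $\boldsymbol{d}=\boldsymbol{w}^{k+1}$ yields $\frac{1}{2}(\Vert\boldsymbol{w}-\boldsymbol{w}^{k+1}\Vert_{\boldsymbol{\Sigma}_{k}}^{2}-\Vert\boldsymbol{w}-\boldsymbol{w}^{k}\Vert_{\boldsymbol{\Sigma}_{k}}^{2})-\frac{1}{2}\Vert\boldsymbol{w}^{k}-\boldsymbol{w}^{k+1}\Vert_{\boldsymbol{\Sigma}_{k}}^{2}$: this is precisely the desired $\boldsymbol{\Sigma}_{k}$-telescope, together with a leftover residual square. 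For the second summand I insert the correction rule once more to obtain $(\boldsymbol{w}^{k}-\tilde{\boldsymbol{w}}^{k})^{T}\boldsymbol{\Sigma}_{k}\cdot\beta\boldsymbol{M}_{k}(\boldsymbol{w}^{k}-\tilde{\boldsymbol{w}}^{k})=\beta(\boldsymbol{w}^{k}-\tilde{\boldsymbol{w}}^{k})^{T}\boldsymbol{Q}_{k}(\boldsymbol{w}^{k}-\tilde{\boldsymbol{w}}^{k})$, which I symmetrize to $\frac{\beta}{2}(\boldsymbol{w}^{k}-\tilde{\boldsymbol{w}}^{k})^{T}(\boldsymbol{Q}_{k}+\boldsymbol{Q}_{k}^{T})(\boldsymbol{w}^{k}-\tilde{\boldsymbol{w}}^{k})$ via the elementary identity $\boldsymbol{v}^{T}\boldsymbol{Q}_{k}\boldsymbol{v}=\frac{1}{2}\boldsymbol{v}^{T}(\boldsymbol{Q}_{k}+\boldsymbol{Q}_{k}^{T})\boldsymbol{v}$.

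The concluding step merges the leftover $-\frac{1}{2}\Vert\boldsymbol{w}^{k}-\boldsymbol{w}^{k+1}\Vert_{\boldsymbol{\Sigma}_{k}}^{2}$ with the symmetrized quadratic: expanding $\Vert\boldsymbol{w}^{k}-\boldsymbol{w}^{k+1}\Vert_{\boldsymbol{\Sigma}_{k}}^{2}=\beta^{2}(\boldsymbol{w}^{k}-\tilde{\boldsymbol{w}}^{k})^{T}\boldsymbol{M}_{k}^{T}\boldsymbol{\Sigma}_{k}\boldsymbol{M}_{k}(\boldsymbol{w}^{k}-\tilde{\boldsymbol{w}}^{k})$ and collecting terms delivers $\frac{\beta}{2}(\boldsymbol{w}^{k}-\tilde{\boldsymbol{w}}^{k})^{T}[(\boldsymbol{Q}_{k}+\boldsymbol{Q}_{k}^{T})-\beta\boldsymbol{M}_{k}^{T}\boldsymbol{\Sigma}_{k}\boldsymbol{M}_{k}](\boldsymbol{w}^{k}-\tilde{\boldsymbol{w}}^{k})=\frac{\beta}{2}\Vert\boldsymbol{w}^{k}-\tilde{\boldsymbol{w}}^{k}\Vert_{\boldsymbol{G}_{k}}^{2}$ by the very definition of $\boldsymbol{G}_{k}$. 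The only delicate point is the bookkeeping of the factor $\beta$ across the polarization residual and the symmetrized $\boldsymbol{Q}_{k}$-quadratic, so that one power of $\beta$ cancels and exactly the right coefficients survive inside $\boldsymbol{G}_{k}$. The positive-definiteness hypotheses on $\boldsymbol{\Sigma}_{k}$ and $\boldsymbol{G}_{k}$ are not actually used in deriving (\ref{eq72}); they only make the $\boldsymbol{\Sigma}_{k}$- and $\boldsymbol{G}_{k}$-quantities genuine squared norms and will be invoked downstream for contraction and $O(1/t)$ rate arguments.
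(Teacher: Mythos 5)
Your proposal is correct and follows essentially the same route as the paper's proof: both hinge on rewriting $\boldsymbol{Q}_{k}(\boldsymbol{w}^{k}-\tilde{\boldsymbol{w}}^{k})$ as $\frac{1}{\beta}\boldsymbol{\Sigma}_{k}(\boldsymbol{w}^{k}-\boldsymbol{w}^{k+1})$ via the correction rule, invoking the polarization identity (\ref{eq43}) to extract the $\boldsymbol{\Sigma}_{k}$-telescope, and absorbing the leftover quadratic into $\boldsymbol{G}_{k}$. Your splitting of $\boldsymbol{w}-\tilde{\boldsymbol{w}}^{k}$ into $(\boldsymbol{w}-\boldsymbol{w}^{k})+(\boldsymbol{w}^{k}-\tilde{\boldsymbol{w}}^{k})$ is only a cosmetic reorganization of the paper's direct four-point application of (\ref{eq43}), and your observation that positive definiteness is not needed for the identity itself is accurate.
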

\begin{proof}
By (\ref{eq70}), we have 
\begin{equation}
(\boldsymbol{w}-\tilde{\boldsymbol{w}}^{k})^{T}\boldsymbol{Q}_{k}(\boldsymbol{w}^{k}-\tilde{\boldsymbol{w}}^{k})=(\boldsymbol{w}-\tilde{\boldsymbol{w}}^{k})^{T}\boldsymbol{\Sigma}_{k}\boldsymbol{M}_{k}(\boldsymbol{w}^{k}-\tilde{\boldsymbol{w}}^{k})=(\boldsymbol{w}-\tilde{\boldsymbol{w}}^{k})^{T}\frac{1}{\beta}\boldsymbol{\Sigma}_{k}(\boldsymbol{w}^{k}-\boldsymbol{w}^{k+1}).\label{eq73}
\end{equation}
Applying (\ref{eq43}) to the equation (\ref{eq73}), we obtain
\begin{equation}
\begin{aligned}
(\boldsymbol{w}-\tilde{\boldsymbol{w}}^{k})^{T}\boldsymbol{\Sigma}_{k}(\boldsymbol{w}^{k}-\boldsymbol{w}^{k+1})=&\frac{1}{2}\left( \norm{\boldsymbol{w}-\boldsymbol{w}^{k+1}}_{\boldsymbol{\Sigma}_{k}}^{2}-\norm{\boldsymbol{w}-\boldsymbol{w}^{k}}_{\boldsymbol{\Sigma}_{k}}^{2}\right)\\
&+\frac{1}{2}\left( \norm{\tilde{\boldsymbol{w}}{}^{k}-\boldsymbol{w}^{k}}_{\boldsymbol{\Sigma}_{k}}^{2}-\norm{\tilde{\boldsymbol{w}}{}^{k}-\boldsymbol{w}^{k+1}}_{\boldsymbol{\Sigma}_{k}}^{2}\right).\label{eq74 }
\end{aligned}
\end{equation}
On the other hand, the following equation holds.
\begin{eqnarray}
\lefteqn{\norm{\tilde{\boldsymbol{w}}{}^{k}-\boldsymbol{w}^{k}}_{\boldsymbol{\Sigma}_{k}}^{2}-\norm{\tilde{\boldsymbol{w}}{}^{k}-\boldsymbol{w}^{k+1}}_{\boldsymbol{\Sigma}_{k}}^{2}}\nonumber\\
&=&\norm{\tilde{\boldsymbol{w}}{}^{k}-\boldsymbol{w}^{k}}_{\boldsymbol{\Sigma}_{k}}^{2}-\norm{(\tilde{\boldsymbol{w}}{}^{k}-\boldsymbol{w}^{k})-(\boldsymbol{w}^{k+1}-\boldsymbol{w}^{k})}_{\boldsymbol{\Sigma}_{k}}^{2}\nonumber\\
&=&\norm{\tilde{\boldsymbol{w}}{}^{k}-\boldsymbol{w}^{k}}_{\boldsymbol{\Sigma}_{k}}^{2}-\norm{(\tilde{\boldsymbol{w}}{}^{k}-\boldsymbol{w}^{k})-\beta\boldsymbol{M}_{k}(\tilde{\boldsymbol{w}}{}^{k}-\boldsymbol{w}^{k})}_{\boldsymbol{\Sigma}_{k}}^{2}\\
&=&2\beta(\tilde{\boldsymbol{w}}{}^{k}-\boldsymbol{w}^{k})^{T}\boldsymbol{\Sigma}_{k}\boldsymbol{M}_{k}(\tilde{\boldsymbol{w}}{}^{k}-\boldsymbol{w}^{k})-\beta^{2}(\tilde{\boldsymbol{w}}{}^{k}-\boldsymbol{w}^{k})^{T}\boldsymbol{M}_{k}^{T}\boldsymbol{\Sigma}_{k}\boldsymbol{M}_{k}(\tilde{\boldsymbol{w}}{}^{k}-\boldsymbol{w}^{k})\nonumber\\
&=&\beta(\tilde{\boldsymbol{w}}{}^{k}-\boldsymbol{w}^{k})^{T}(\boldsymbol{Q}_{k}^{T}+\boldsymbol{Q}_{k}-\beta\boldsymbol{M}_{k}^{T}\boldsymbol{\Sigma}_{k}\boldsymbol{M}_{k})(\tilde{\boldsymbol{w}}{}^{k}-\boldsymbol{w}^{k})\nonumber\\
&=&\beta\norm{\tilde{\boldsymbol{w}}{}^{k}-\boldsymbol{w}^{k}}_{\boldsymbol{G}_{k}}^{2}.\nonumber
\label{eq75 }
\end{eqnarray}
Combining with (\ref{eq73}), we have
\begin{equation}
\begin{aligned}
(\boldsymbol{w}-\tilde{\boldsymbol{w}}^{k})^{T}\boldsymbol{Q}_{k}(\boldsymbol{w}^{k}-\tilde{\boldsymbol{w}}^{k})=\frac{1}{2\beta}\left( \norm{\boldsymbol{w}-\boldsymbol{w}^{k+1}}_{\boldsymbol{\Sigma}_{k}}^{2}-\norm{\boldsymbol{w}-\boldsymbol{w}^{k}}_{\boldsymbol{\Sigma}_{k}}^{2}\right)+\frac{1}{2}\norm{\tilde{\boldsymbol{w}}{}^{k}-\boldsymbol{w}^{k}}_{\boldsymbol{G}_{k}}^{2},\label{eq76}
\end{aligned}
\end{equation}
By replacing the right-hand side of (\ref{eq69}) with (\ref{eq76}), the assertion of this theorem is proved.
\end{proof}

\begin{theorem}[Sequence contraction]
Let $\{\boldsymbol{w}^{k},\tilde{\boldsymbol{w}}^{k},\boldsymbol{w}^{k+1}\}$ be generated by the PC method. For the predictive matrix $\boldsymbol{Q}_{k}$, if there is a corrective matrix $\boldsymbol{M}_{k}$ that satisfies the convergence condition (\ref{eq71}), then we have 
\begin{equation}
\Vert\boldsymbol{w}^{*}-\boldsymbol{w}^{k+1}\Vert_{\boldsymbol{\Sigma}_{k}}^{2}\leq\Vert\boldsymbol{w}^{*}-\boldsymbol{w}^{k}\Vert_{\boldsymbol{\Sigma}_{k}}^{2}-\beta\Vert\boldsymbol{w}^{k}-\tilde{\boldsymbol{w}}^{k}\Vert_{\boldsymbol{G}_{k}}^{2}, \quad\forall \ \boldsymbol{w}^{*}\in \Omega^{*}, \label{eq77}
\end{equation}
where $\Omega^{*}$ is the set of optimal solutions.
\end{theorem}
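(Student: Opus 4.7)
The plan is to specialize $\boldsymbol{w}=\boldsymbol{w}^{*}$ in the already-established bound (\ref{eq72}) and then show that the resulting left-hand side is non-positive; the claimed contraction inequality will then drop out by rearranging and discarding a single non-negative term.

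Step one is a direct substitution: with $\boldsymbol{w}=\boldsymbol{w}^{*}$, the left-hand side of (\ref{eq72}) becomes $\beta[f(\boldsymbol{x}^{*})-f(\tilde{\boldsymbol{x}}^{k})+(\boldsymbol{w}^{*}-\tilde{\boldsymbol{w}}^{k})^{T}\boldsymbol{\Gamma}(\tilde{\boldsymbol{w}}^{k})]$. Step two invokes the monotonicity of $\boldsymbol{\Gamma}$ from Lemma \ref{lemma2}: applying (\ref{eq13}) with $\boldsymbol{w}\leftarrow\tilde{\boldsymbol{w}}^{k}$ and $\tilde{\boldsymbol{w}}\leftarrow\boldsymbol{w}^{*}$ yields $(\boldsymbol{w}^{*}-\tilde{\boldsymbol{w}}^{k})^{T}\boldsymbol{\Gamma}(\tilde{\boldsymbol{w}}^{k})\le(\boldsymbol{w}^{*}-\tilde{\boldsymbol{w}}^{k})^{T}\boldsymbol{\Gamma}(\boldsymbol{w}^{*})$, so the quantity above is bounded by $\beta[f(\boldsymbol{x}^{*})-f(\tilde{\boldsymbol{x}}^{k})+(\boldsymbol{w}^{*}-\tilde{\boldsymbol{w}}^{k})^{T}\boldsymbol{\Gamma}(\boldsymbol{w}^{*})]$. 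Step three uses the variational characterization (\ref{eq23}) of the saddle point at $\boldsymbol{w}=\tilde{\boldsymbol{w}}^{k}\in\Omega$, which gives $f(\tilde{\boldsymbol{x}}^{k})-f(\boldsymbol{x}^{*})+(\tilde{\boldsymbol{w}}^{k}-\boldsymbol{w}^{*})^{T}\boldsymbol{\Gamma}(\boldsymbol{w}^{*})\ge 0$; this is exactly the statement that the step-two bound is $\le 0$. Chaining the three inequalities, the left-hand side of the specialized (\ref{eq72}) is non-positive, so the right-hand side is too, and isolating the $\boldsymbol{\Sigma}_{k}$-norm difference produces (\ref{eq77}).

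The main obstacle is sign book-keeping, since both (\ref{eq13}) and (\ref{eq23}) are expressed in terms of pairings $(\boldsymbol{w}-\tilde{\boldsymbol{w}})^{T}\boldsymbol{\Gamma}(\cdot)$ whose orientation must be matched to that appearing in (\ref{eq72}); a careless assignment of which variable plays the role of $\boldsymbol{w}$ versus $\tilde{\boldsymbol{w}}$ will flip the sign and break the argument. One should also briefly record that $\tilde{\boldsymbol{w}}^{k}\in\Omega$, which is automatic from the prediction step (\ref{eq25}) since the sub-problems are posed over $\mathcal{X}$ and $\mathcal{Z}$, so that (\ref{eq23}) may legitimately be instantiated at it. The positive-definiteness hypotheses in (\ref{eq71}) are not used in the argument itself; they were already consumed to produce (\ref{eq72}), and they serve here only to guarantee that $\norm{\,\cdot\,}_{\boldsymbol{\Sigma}_{k}}$ is a genuine distance and that the subtracted term $\beta\norm{\boldsymbol{w}^{k}-\tilde{\boldsymbol{w}}^{k}}_{\boldsymbol{G}_{k}}^{2}$ is non-negative, making (\ref{eq77}) a true contraction rather than a vacuous bound.
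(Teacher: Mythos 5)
Your proposal is correct and follows essentially the same route as the paper's proof: substitute $\boldsymbol{w}=\boldsymbol{w}^{*}$ into (\ref{eq72}), invoke the monotonicity of $\boldsymbol{\Gamma}$ from Lemma \ref{lemma2} to pass from $\boldsymbol{\Gamma}(\tilde{\boldsymbol{w}}^{k})$ to $\boldsymbol{\Gamma}(\boldsymbol{w}^{*})$, and then apply the optimality characterization (\ref{eq23}) at $\tilde{\boldsymbol{w}}^{k}$ to conclude the relevant quantity has the right sign; your sign bookkeeping is consistent throughout. The only nitpick is the phrase ``discarding a single non-negative term'' in your opening plan --- nothing is discarded to obtain (\ref{eq77}), since the $\boldsymbol{G}_{k}$-norm term is retained in the final inequality, as your own Step three correctly carries out.
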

\begin{proof} Setting $\boldsymbol{w}=\boldsymbol{w}^{*}$ in (\ref{eq72}), we get 
\begin{equation}
\norm{\boldsymbol{w}^{*}-\boldsymbol{w}^{k}}_{\boldsymbol{\Sigma}_{k}}^{2}-\norm{\boldsymbol{w}^{*}-\boldsymbol{w}^{k+1}}_{\boldsymbol{\Sigma}_{k}}^{2}-\beta\Vert\boldsymbol{w}^{k}-\tilde{\boldsymbol{w}}^{k}\Vert_{\boldsymbol{G}_{k}}^{2}\ge 2\beta[ f(\tilde{\boldsymbol{x}}^{k})- f(\boldsymbol{x}^{*})+(\tilde{\boldsymbol{w}}^{k}-\boldsymbol{w}^{*})^{T}\boldsymbol{\Gamma}(\tilde{\boldsymbol{w}}^{k})].\label{eq78 }
\end{equation}
By Lemme 2, the monotone operator satisfies $(\tilde{\boldsymbol{w}}^{k}-\boldsymbol{w}^{*})^{T}\boldsymbol{\Gamma}(\tilde{\boldsymbol{w}}^{k})\ge (\tilde{\boldsymbol{w}}^{k}-\boldsymbol{w}^{*})^{T}\boldsymbol{\Gamma}(\boldsymbol{w}^{*}).$
\begin{equation}
\Vert\boldsymbol{w}^{*}-\boldsymbol{w}^{k}\Vert_{\boldsymbol{\Sigma}_{k}}^{2}-\Vert\boldsymbol{w}^{*}-\boldsymbol{w}^{k+1}\Vert_{\boldsymbol{\Sigma}_{k}}^{2}-\beta\Vert\boldsymbol{w}^{k}-\tilde{\boldsymbol{w}}^{k}\Vert_{\boldsymbol{G}_{k}}^{2}\ge 2\beta[ f(\tilde{\boldsymbol{x}}^{k})- f(\boldsymbol{x}^{*})+(\tilde{\boldsymbol{w}}^{k}-\boldsymbol{w}^{*})^{T}\boldsymbol{\Gamma}(\boldsymbol{w}^{*})]\ge0.\label{eq79 }
\end{equation}
Then we have 
\begin{equation}
\Vert\boldsymbol{w}^{*}-\boldsymbol{w}^{k}\Vert_{\boldsymbol{\Sigma}_{k}}^{2}-\Vert\boldsymbol{w}^{*}-\boldsymbol{w}^{k+1}\Vert_{\boldsymbol{\Sigma}_{k}}^{2}-\beta\Vert\boldsymbol{w}^{k}-\tilde{\boldsymbol{w}}^{k}\Vert_{\boldsymbol{G}_{k}}^{2}\ge0.\label{eq80 }
\end{equation}
Thus, this theorem holds.
\end{proof}
Lemma \ref{lemma4} is also the base for the convergence rate proof.  Referring to (\ref{eq49}), we set $\tilde{\boldsymbol{w}}=\tilde{\boldsymbol{w}}^{k}$ and add it to (\ref{eq72}). Then we can obtain the following inequality.
\begin{equation}
f(\boldsymbol{x})- f(\tilde{\boldsymbol{x}}^{k})+(\boldsymbol{w}-\tilde{\boldsymbol{w}}^{k})^{T}\boldsymbol{\Gamma}(\boldsymbol{w})+\frac{1}{2\beta}\norm{\boldsymbol{w}-\boldsymbol{w}^{k}}_{\boldsymbol{\Sigma}_{k}}^{2}\ge\frac{1}{2\beta}\norm{\boldsymbol{w}-\boldsymbol{w}^{k+1}}_{\boldsymbol{\Sigma}_{k}}^{2}, \quad\forall \ \boldsymbol{w}\in \Omega. \label{eq81}
\end{equation}
Note that the above assertion holds only for  $\boldsymbol{G}_{k}\succ 0$.

\begin{theorem}
Let problem (\ref{eq68}) be solved by the PC method, and the generated sequence $\{\tilde{\boldsymbol{x}}^{k}\}$ follows condition (\ref{eq54}). Let $\tilde{\boldsymbol{x}}_{t}$ and $\tilde{\boldsymbol{w}}_{t}$ be defined as follows. 
\begin{equation}
\tilde{\boldsymbol{x}}_{t}=\frac{1}{1+t}\sum_{k=0}^{t}\tilde{\boldsymbol{x}}^{k},\quad \tilde{\boldsymbol{w}}_{t}=\frac{1}{1+t}\sum_{k=0}^{t}\tilde{\boldsymbol{w}}^{k}.\label{eq82 }
\end{equation}
If the regularization factors satisfy Lemma \ref{lemma55} and the condition(\ref{eq71}) holds, for any integer number $t>0$, we have 
\begin{equation}
\tilde{\boldsymbol{w}}_{t}\in \Omega,\quad f(\tilde{\boldsymbol{x}}_{t})-f(\boldsymbol{x})+(\tilde{\boldsymbol{w}}_{t}-\boldsymbol{w})^{T}\boldsymbol{\Gamma}(\boldsymbol{w})\leq \frac{1}{2\beta(1+t)}\Vert\boldsymbol{w}-\boldsymbol{w}^{0}\Vert_{\boldsymbol{\Sigma}_{0}}^{2}, \quad\forall \ \boldsymbol{w}\in \Omega. \label{eq83}
\end{equation}
\end{theorem}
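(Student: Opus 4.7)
The plan is to mimic the structure of the $O(1/t)$ proof for the relaxed PPA (the theorem ending at (\ref{eq63})), since inequality (\ref{eq81}) plays exactly the same role for the PC method that (\ref{eq53}) played for the relaxed PPA, except that $1/\gamma$ is replaced by $1/\beta$ and the extra non-negative residual $\gamma(2-\gamma)\|\cdot\|_{\boldsymbol\Sigma_k}^2$ does not appear (it has already been absorbed into the term $\|\tilde{\boldsymbol w}^k-\boldsymbol w^k\|_{\boldsymbol G_k}^2$ in the derivation of (\ref{eq81})). First, I will apply (\ref{eq81}) at each index $k$ and sum over $k=0,1,\ldots,t$ to obtain
\begin{equation}
(1+t)f(\boldsymbol x)-\sum_{k=0}^{t}f(\tilde{\boldsymbol x}^{k})+\Big((1+t)\boldsymbol w-\sum_{k=0}^{t}\tilde{\boldsymbol w}^{k}\Big)^{T}\boldsymbol\Gamma(\boldsymbol w)+\frac{1}{2\beta}\sum_{k=0}^{t}\big(\|\boldsymbol w-\boldsymbol w^{k}\|_{\boldsymbol\Sigma_{k}}^{2}-\|\boldsymbol w-\boldsymbol w^{k+1}\|_{\boldsymbol\Sigma_{k}}^{2}\big)\ge 0 \nonumber
\end{equation}
for every $\boldsymbol w\in\Omega$.

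Next I will rewrite the sum with varying proximal matrices. Splitting each consecutive pair by inserting and removing $\|\boldsymbol w-\boldsymbol w^{k}\|_{\boldsymbol\Sigma_{k-1}}^{2}$, one gets
\begin{equation}
\sum_{k=0}^{t}\big(\|\boldsymbol w-\boldsymbol w^{k}\|_{\boldsymbol\Sigma_{k}}^{2}-\|\boldsymbol w-\boldsymbol w^{k+1}\|_{\boldsymbol\Sigma_{k}}^{2}\big)=\|\boldsymbol w-\boldsymbol w^{0}\|_{\boldsymbol\Sigma_{0}}^{2}-\|\boldsymbol w-\boldsymbol w^{t+1}\|_{\boldsymbol\Sigma_{t}}^{2}-\sum_{k=1}^{t}(\boldsymbol w-\boldsymbol w^{k})^{T}\boldsymbol D_{k}(\boldsymbol w-\boldsymbol w^{k}), \nonumber
\end{equation}
with $\boldsymbol D_{k}=\boldsymbol\Sigma_{k-1}-\boldsymbol\Sigma_{k}$. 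Because the regularization factors satisfy Lemma \ref{lemma55}, $\boldsymbol D_{k}\succeq 0$, and since $\boldsymbol\Sigma_{t}\succ 0$ by the convergence condition (\ref{eq71}), both subtracted terms are non-negative and can be dropped, yielding the clean bound
\begin{equation}
\sum_{k=0}^{t}\big(\|\boldsymbol w-\boldsymbol w^{k}\|_{\boldsymbol\Sigma_{k}}^{2}-\|\boldsymbol w-\boldsymbol w^{k+1}\|_{\boldsymbol\Sigma_{k}}^{2}\big)\le \|\boldsymbol w-\boldsymbol w^{0}\|_{\boldsymbol\Sigma_{0}}^{2}. \nonumber
\end{equation}

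Finally I will divide the summed inequality by $1+t$, invoke the convexity of $f$ on $\mathcal X$ together with $\tilde{\boldsymbol x}_{t}=\frac{1}{1+t}\sum_{k=0}^{t}\tilde{\boldsymbol x}^{k}\in\mathcal X$ to replace $\frac{1}{1+t}\sum_{k=0}^{t}f(\tilde{\boldsymbol x}^{k})$ with the larger quantity $f(\tilde{\boldsymbol x}_{t})$ on the right-hand side (which tightens the inequality in our favour), and observe that $\tilde{\boldsymbol w}_{t}\in\Omega$ by convexity of $\Omega$. Rearranging then delivers (\ref{eq83}).

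The only non-mechanical step is the third one: without Lemma \ref{lemma55} the residual $\sum_{k=1}^{t}\|\boldsymbol w-\boldsymbol w^{k}\|_{\boldsymbol D_{k}}^{2}$ produced by the varying matrices could have either sign, and the clean $O(1/t)$ rate would fail. So the main obstacle is conceptually already resolved by the regularization schedule of Lemma \ref{lemma55}; the remainder of the argument parallels (\ref{eq60})--(\ref{eq63}) almost verbatim, with $\gamma$ and $\boldsymbol\Sigma_{k}$ (from the relaxed PPA) replaced by $\beta$ and $\boldsymbol\Sigma_{k}=\boldsymbol Q_{k}\boldsymbol M_{k}^{-1}$ (from the PC method).
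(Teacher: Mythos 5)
Your proposal is correct and follows essentially the same route as the paper: sum (\ref{eq81}) over $k=0,\ldots,t$, telescope the $\boldsymbol{\Sigma}_k$-norm terms into $\Vert\boldsymbol{w}-\boldsymbol{w}^{0}\Vert_{\boldsymbol{\Sigma}_{0}}^{2}$ minus non-negative residuals $\Vert\boldsymbol{w}-\boldsymbol{w}^{k}\Vert_{\boldsymbol{D}_{k}}^{2}$ controlled by Lemma \ref{lemma55}, divide by $1+t$, and apply Jensen's inequality to $f$. One small wording slip: by convexity $f(\tilde{\boldsymbol{x}}_{t})$ is the \emph{smaller} quantity, not the larger one, but since it enters with a minus sign the substitution still only strengthens the left-hand side, so the argument stands.
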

\begin{proof}
For all $k=0, 1,\cdots,t$, $\tilde{\boldsymbol{w}}^{k}$ belongs to the convex set $\Omega$. Then the linear combination $\tilde{\boldsymbol{w}}_{t}\in \Omega$ holds.  Summing inequality (\ref{eq81}) over $k=0,1,\cdots,t$, for any $ \boldsymbol{w}\in \Omega$, we get 
\begin{equation}
(1+t)f(\boldsymbol{x})- \sum_{k=0}^{t}f(\tilde{\boldsymbol{x}}^{k})+\left((1+t)\boldsymbol{w}-\sum_{k=0}^{t}\tilde{\boldsymbol{w}}^{k}\right)^{T}\boldsymbol{\Gamma}(\boldsymbol{w})+\frac{1}{2\beta}\sum_{k=0}^{t}\norm{\boldsymbol{w}-\boldsymbol{w}^{k}}_{\boldsymbol{\Sigma}_{k}}^{2}\ge\frac{1}{2\beta}\sum_{k=0}^{t}\norm{\boldsymbol{w}-\boldsymbol{w}^{k+1}}_{\boldsymbol{\Sigma}_{k}}^{2}.
\end{equation}
The inequality above divided by $1+t$ equals 
\begin{equation}\begin{aligned}
f(\boldsymbol{x})- \frac{1}{1+t}\sum_{k=0}^{t}f(\tilde{\boldsymbol{x}}^{k})+\left(\boldsymbol{w}-\tilde{\boldsymbol{w}}_{t}\right)^{T}\boldsymbol{\Gamma}(\boldsymbol{w})+\frac{1}{2\beta(1+t)}\sum_{k=0}^{t}\norm{\boldsymbol{w}-\boldsymbol{w}^{k}}_{\boldsymbol{\Sigma}_{k}}^{2}&\ge\frac{1}{2\beta(1+t)}\sum_{k=0}^{t}\norm{\boldsymbol{w}-\boldsymbol{w}^{k+1}}_{\boldsymbol{\Sigma}_{k}}^{2}.
 \end{aligned}
 \end{equation}
We know that $f(\boldsymbol{x})$ is convex in $\mathcal{X}$ and $\tilde{\boldsymbol{x}}_{t}=\frac{1}{1+t}\sum_{k=0}^{t}\tilde{\boldsymbol{x}}^{k}\in \mathcal{X}$. So  $f(\tilde{\boldsymbol{x}}_{t})\leq\frac{1}{1+t}\sum_{k=0}^{t}f(\tilde{\boldsymbol{x}}^{k})$ holds.
\begin{equation}\begin{aligned}
f(\boldsymbol{x})- f(\tilde{\boldsymbol{x}}^{k}_{t})+\left(\boldsymbol{w}-\tilde{\boldsymbol{w}}_{t}\right)^{T}\boldsymbol{\Gamma}(\boldsymbol{w})+\frac{1}{2\beta(1+t)}\norm{\boldsymbol{w}-\boldsymbol{w}^{0}}_{\boldsymbol{\Sigma}_{0}}^{2}
&\ge\frac{1}{2\beta(1+t)}\sum_{k=1}^{t}\norm{\boldsymbol{w}-\boldsymbol{w}^{k}}_{\boldsymbol{D}_{k}}^{2}\\
f(\boldsymbol{x})- f(\tilde{\boldsymbol{x}}^{k}_{t})+\left(\boldsymbol{w}-\tilde{\boldsymbol{w}}_{t}\right)^{T}\boldsymbol{\Gamma}(\boldsymbol{w})+\frac{1}{2\beta(1+t)}\norm{\boldsymbol{w}-\boldsymbol{w}^{0}}_{\boldsymbol{\Sigma}_{0}}^{2}
&\ge 0.
 \end{aligned}
 \label{eq84}
\end{equation}
Thus, this theorem holds.
\end{proof}
\begin{remark}
Referring back to equation (\ref{eq52}), the conclusion (\ref{eq83}) clearly indicates that the method can produce an approximate solution with an accuracy of $O(1/t)$ after $t$ iterations. In other words, the convergence rate of the method is established at $O(1/t)$ if and only if the regularization factors satisfy Lemma \ref{lemma55}. 
\end{remark}

\subsection{Corrective Matrix Selection}\label{se42}
In \cite{E4}, they provide several choices for corrective matrix $\boldsymbol{M}$. For the first choice, we take 
\begin{equation} \boldsymbol{M}_{k}=\left(
\begin{array}{cc}
\boldsymbol{I}_{n} & \frac{-1}{r_{k}}\mathcal{D} \Phi (\tilde{\boldsymbol{x}}^{k})^{T} \\0&\boldsymbol{I}_{m} 
\end{array}\right).\label{eq85 }
\end{equation}
The matrix $\boldsymbol{\Sigma}_{k}$ is positive-definite for any $r_{k}, s_{k}>0$.
\begin{equation} \boldsymbol{\Sigma}_{k}=\boldsymbol{Q}_{k}\boldsymbol{M}_{k}^{-1}=\left(
\begin{array}{cc}
r_{k}\boldsymbol{I}_{n} & 0 \\
0&s_{k}\boldsymbol{I}_{m} 
\end{array}
\right)\succ0.\label{eq86 }
\end{equation}
When the condition $(2-\beta)^{2}r_{k}s_{k}>\norm{\mathcal{D} \Phi (\tilde{\boldsymbol{x}}{}^{k})}^{2}$ holds, we have
\begin{equation} 
\boldsymbol{G}_{k}=\boldsymbol{Q}_{k}^{T}+\boldsymbol{Q}_{k}-\beta\boldsymbol{M}_{k}^{T}\boldsymbol{\Sigma}_{k}\boldsymbol{M}_{k}=\left(
\begin{array}{cc}
(2-\beta)r_{k}\boldsymbol{I}_{n} & (\beta-1)\mathcal{D} \Phi (\tilde{\boldsymbol{x}}^{k})^{T} \\ (\beta-1)\mathcal{D} \Phi (\tilde{\boldsymbol{x}}^{k})&(2-\beta)s_{k}\boldsymbol{I}_{m} -\frac{\beta}{r_{k}}\mathcal{D} \Phi (\tilde{\boldsymbol{x}}^{k})\mathcal{D} \Phi (\tilde{\boldsymbol{x}}^{k})^{T}
\end{array}
\right)\succ0.\label{eq87 }
\end{equation}
For the second choice, we take 
\begin{equation} \boldsymbol{M}_{k}=\left(
\begin{array}{cc}
\boldsymbol{I}_{n} & 0 \\ \frac{1}{s_{k}}\mathcal{D} \Phi (\tilde{\boldsymbol{x}}^{k})&\boldsymbol{I}_{m} 
\end{array}\right).\label{eq88 }
\end{equation}
The matrix $\boldsymbol{\Sigma}_{k}$ is positive-definite for any $r_{k}, s_{k}>0$.
\begin{equation} \boldsymbol{\Sigma}_{k}=\boldsymbol{Q}_{k}\boldsymbol{M}_{k}^{-1}=\left(
\begin{array}{cc}
r_{k}\boldsymbol{I}_{n}+\frac{1}{s_{k}}\mathcal{D} \Phi (\tilde{\boldsymbol{x}}^{k})^{T}\mathcal{D} \Phi (\tilde{\boldsymbol{x}}^{k}) & -\mathcal{D} \Phi (\tilde{\boldsymbol{x}}^{k})^{T} \\
-\mathcal{D} \Phi (\tilde{\boldsymbol{x}}^{k})&s_{k}\boldsymbol{I}_{m} 
\end{array}
\right)\succ0.\label{eq89 }
\end{equation}
When the condition $(2-\beta)^{2}r_{k}s_{k}>\norm{\mathcal{D} \Phi (\tilde{\boldsymbol{x}}{}^{k})}^{2}$ holds, we have
\begin{equation} 
\boldsymbol{G}_{k}=\boldsymbol{Q}_{k}^{T}+\boldsymbol{Q}_{k}-\beta\boldsymbol{M}_{k}^{T}\boldsymbol{\Sigma}_{k}\boldsymbol{M}_{k}=\left(
\begin{array}{cc}
(2-\beta)r_{k}\boldsymbol{I}_{n} & -\mathcal{D} \Phi (\tilde{\boldsymbol{x}}^{k})^{T} \\ -\mathcal{D} \Phi (\tilde{\boldsymbol{x}}^{k})&(2-\beta)s_{k}\boldsymbol{I}_{m} 
\end{array}
\right)\succ0.\label{eq90 }
\end{equation}

\begin{algorithm}[t]\label{algorithm2}
\caption{Prediction correction method}
\LinesNumbered
\KwIn{ Parameters $\mu_{1}>1,\mu_{2}>1$; tolerant error $\tau$.}
\KwOut{The optimal solution: $\boldsymbol{x}^{*}$.}
Initialize $\boldsymbol{x}^{0},\boldsymbol{\lambda}^{0},k=0$\;
\While{$error\ge\tau\ $}{
\emph{\% The prediction step:}\\
$r_{k}=\frac{1}{\mu_{1}}\Vert\mathcal{D} \Phi (\boldsymbol{x}^{k})\Vert$\;
$\tilde{\boldsymbol{x}}^{k}=\arg\min\{\mathcal{L}''(\boldsymbol{x},\boldsymbol{\lambda}^{k})+\frac{r_{k}}{2}\norm{\boldsymbol{x}-\boldsymbol{x}^{k}}^{2}\mid \boldsymbol{x}\in \mathcal{X}\}$\;
$s_{k}=\frac{\mu_{2}}{r_{k}}\Vert\mathcal{D} \Phi (\tilde{\boldsymbol{x}}^{k})\Vert^{2}$\;
$\tilde{\boldsymbol{\lambda}}^{k}=P_{\mathcal{Z}}\left(\boldsymbol{\lambda}^{k}+\frac{1}{s_{k}}  \Phi (\tilde{\boldsymbol{x}}^{k}) \right)$\;
\emph{\% The correction step:}\\
$\beta^{*}={(\boldsymbol{w}^{k}-\tilde{\boldsymbol{w}}^{k})^{T}\boldsymbol{Q}_{k}(\boldsymbol{w}^{k}-\tilde{\boldsymbol{w}}^{k})}/{\Vert\boldsymbol{M}_{k}(\boldsymbol{w}^{k}-\tilde{\boldsymbol{w}}^{k})\Vert^{2}_{\boldsymbol{\Sigma}_{k}}}$\;
$\beta=\gamma\cdot\beta^{*}$, where $\gamma\in(0, (2-\sqrt{1/\mu})/\beta^{*})$\;
$\boldsymbol{w}^{k+1}=\boldsymbol{w}^{k}-\beta\boldsymbol{M}_{k}(\boldsymbol{w}^{k}-\tilde{\boldsymbol{w}}^{k})$\;
$error=\mbox{abs}[ f(\boldsymbol{x}^{k})- f(\boldsymbol{x}^{k+1})]$\;
$k=k+1$\;
} 
\Return { $\boldsymbol{x}^{*}=\boldsymbol{x}^{k}$}\;
 \end{algorithm}

\subsection{Step Size in the Correction Step}
In \cite{H1}, the authors investigate the selection of $\beta$ for a saddle-point problem with a linear coupling operator. The approach is to maximize the distance $\Vert\boldsymbol{w}^{k}-\boldsymbol{w}^{*}\Vert^{2}_{\boldsymbol{\Sigma}_{k}}-\Vert\boldsymbol{w}^{k+1}-\boldsymbol{w}^{*}\Vert^{2}_{\boldsymbol{\Sigma}_{k}}$. Specifically, to determine an approximate value of $\beta$, the distance is expressed as follows: 
\begin{equation} 
\vartheta(\beta)=\Vert\boldsymbol{w}^{k}-\boldsymbol{w}^{*}\Vert^{2}_{\boldsymbol{\Sigma}_{k}}-\Vert\boldsymbol{w}^{k+1}(\beta)-\boldsymbol{w}^{*}\Vert^{2}_{\boldsymbol{\Sigma}_{k}},\label{eq91 }
\end{equation}
where $\boldsymbol{w}^{k+1}(\beta)=\boldsymbol{w}^{k}-\beta\boldsymbol{M}(\boldsymbol{w}^{k}-\tilde{\boldsymbol{w}}^{k})$. To maximize $\vartheta(\beta)$, we cannot directly do so as the solution $\boldsymbol{w}^{*}$ is unknown. However, by setting $\boldsymbol{w}=\boldsymbol{w}^{*}$ in (\ref{eq69}), we get
\begin{equation} 
\begin{aligned}
(\tilde{\boldsymbol{w}}^{k}-\boldsymbol{w}^{*})^{T}\boldsymbol{Q}_{k}(\boldsymbol{w}^{k}-\tilde{\boldsymbol{w}}^{k})\ge&0\\
[(\boldsymbol{w}^{k}-\boldsymbol{w}^{*})-(\boldsymbol{w}^{k}-\tilde{\boldsymbol{w}}^{k})]^{T}\boldsymbol{Q}_{k}(\boldsymbol{w}^{k}-\tilde{\boldsymbol{w}}^{k})\ge&0\\
(\boldsymbol{w}^{k}-\boldsymbol{w}^{*})^{T}\boldsymbol{Q}_{k}(\boldsymbol{w}^{k}-\tilde{\boldsymbol{w}}^{k})\ge&(\boldsymbol{w}^{k}-\tilde{\boldsymbol{w}}^{k})^{T}\boldsymbol{Q}_{k}(\boldsymbol{w}^{k}-\tilde{\boldsymbol{w}}^{k}).
\end{aligned}\label{eq92}
\end{equation}
By the above inequality, we have 
\begin{equation} 
\begin{aligned}
\vartheta(\beta)=&\Vert\boldsymbol{w}^{k}-\boldsymbol{w}^{*}\Vert^{2}_{\boldsymbol{\Sigma}_{k}}-\Vert\boldsymbol{w}^{k}-\beta\boldsymbol{M}_{k}(\boldsymbol{w}^{k}-\tilde{\boldsymbol{w}}^{k})-\boldsymbol{w}^{*}\Vert^{2}_{\boldsymbol{\Sigma}_{k}}\\
=&2\beta(\boldsymbol{w}^{k}-\boldsymbol{w}^{*})^{T}\boldsymbol{Q}_{k}(\boldsymbol{w}^{k}-\tilde{\boldsymbol{w}}^{k})-\beta^{2}\Vert\boldsymbol{M}_{k}(\boldsymbol{w}^{k}-\tilde{\boldsymbol{w}}^{k})\Vert^{2}_{\boldsymbol{\Sigma}_{k}}\\
\stackrel{(\ref{eq92})}{\ge}&2\beta(\boldsymbol{w}^{k}-\tilde{\boldsymbol{w}}^{k})^{T}\boldsymbol{Q}_{k}(\boldsymbol{w}^{k}-\tilde{\boldsymbol{w}}^{k})-\beta^{2}\Vert\boldsymbol{M}_{k}(\boldsymbol{w}^{k}-\tilde{\boldsymbol{w}}^{k})\Vert^{2}_{\boldsymbol{\Sigma}_{k}}.
\end{aligned}\label{eq93 }
\end{equation}
We define the lower bound of distance as follows.
\begin{equation} 
\xi(\beta)=2\beta(\boldsymbol{w}^{k}-\tilde{\boldsymbol{w}}^{k})^{T}\boldsymbol{Q}_{k}(\boldsymbol{w}^{k}-\tilde{\boldsymbol{w}}^{k})-\beta^{2}\Vert\boldsymbol{M}_{k}(\boldsymbol{w}^{k}-\tilde{\boldsymbol{w}}^{k})\Vert^{2}_{\boldsymbol{\Sigma}_{k}}.\label{eq94 }
\end{equation}
Then $\xi(\beta)$ is a lower bound of $\vartheta(\beta)$. Hence, we maximize the value of $\xi(\beta)$ instead of $\vartheta(\beta)$. Notably, $\xi(\beta)$ is a quadratic function of $\beta$, and its maximum point is reached at
\begin{equation} 
\beta^{*}_{k}=\frac{(\boldsymbol{w}^{k}-\tilde{\boldsymbol{w}}^{k})^{T}\boldsymbol{Q}_{k}(\boldsymbol{w}^{k}-\tilde{\boldsymbol{w}}^{k})}{\Vert\boldsymbol{M}_{k}(\boldsymbol{w}^{k}-\tilde{\boldsymbol{w}}^{k})\Vert^{2}_{\boldsymbol{\Sigma}_{k}}}.\label{eq95 }
\end{equation}
Notice that the step size $\beta^{*}$ represents the extreme point of lower bound $\xi(\beta)$, rather than the actual distance difference $\vartheta(\beta)$. Therefore, for the $k$-th iteration, we set $\beta=\gamma\cdot\beta^{*}_{k}$ as the step size, where $\gamma$ is a tuning parameter. As highlighted in Section \ref{se42}, both corrective-matrix selections satisfy the condition $(2-\beta)^{2}r_{k}s_{k}>\norm{\mathcal{D} \Phi (\tilde{\boldsymbol{x}}{}^{k})}^{2}$. When the values of $(r_{k},s_{k})$ follow (\ref{eq65}) and (\ref{eq66}), it can be further simplified as $(2-\beta)^{2}\mu_{2}>1$. Thus, the tuning parameter should meet $0\leq \gamma\leq(2-\sqrt{1/\mu_{2}})/\beta^{*}$. The pseudo-code of the PC method is found in Algorithm \ref{algorithm2}.

\section{Numerical Experiments}\label{section5}
In this section, we evaluate the performance of the proposed methods.
We consider a general quadratically-constrained quadratic programming (QCQP) problem as follows.
\begin{equation}
\begin{aligned}
\min_{\boldsymbol{x}}\quad & \norm{\boldsymbol{A}\boldsymbol{x}-\boldsymbol{a}}^{2}\\
s.t. \quad&\norm{\boldsymbol{B}_{i}\boldsymbol{x}-\boldsymbol{b}_{i}}^{2}\leq c_{i},\forall \ i=1,\cdots,m,\label{eq96}
\end{aligned}
\end{equation}
where matrices $\boldsymbol{A},\boldsymbol{B}_{1},\cdots,\boldsymbol{B}_{m}\in \mathbb{R}^{n\times n}$, vectors $\boldsymbol{a},\boldsymbol{b}_{1},\cdots,\boldsymbol{b}_{m}\in \mathbb{R}^{n\times 1}$ and $\boldsymbol{c}=[c_{1},\cdots,c_{m}]^{T}\in \mathbb{R}^{m\times 1}$ are randomly generated.
The Lagrangian function can be written as 
\begin{equation}
\mathcal{L}'''(\boldsymbol{x},\boldsymbol{\lambda})=\norm{\boldsymbol{A}\boldsymbol{x}-\boldsymbol{a}}^{2}+\sum_{i=1}^{m} \lambda _{i}(\norm{\boldsymbol{B}_{i}\boldsymbol{x}-\boldsymbol{b}_{i}}^{2}- c_{i}).\label{eq97}
\end{equation}
The gradient operator is equal to 
\begin{equation} \mathcal{D} \Phi (\boldsymbol{x})=\left(
\begin{array}{c}
2(\boldsymbol{B}_{1}\boldsymbol{x} -\boldsymbol{b}_{1})^{T}\boldsymbol{B}_{1}\\
\vdots\\
2(\boldsymbol{B}_{m}\boldsymbol{x} -\boldsymbol{b}_{m})^{T}\boldsymbol{B}_{m}
\end{array}\right).\label{eq98 }
\end{equation}
\subsection{Solving (\ref{eq96}) by the Relaxed PPA }
In the customized PPA, for the $k$-th iteration, we utilize formula (\ref{eq65}) to generate the factor parameter $r_{k}$, followed by updating the following variables. 
\begin{equation}
\tilde{\boldsymbol{x}}^{k}=\arg\min\left\{\norm{\boldsymbol{A}\boldsymbol{x}-\boldsymbol{a}}^{2}+\sum_{i=1}^{m} \lambda_{i}^{k}\norm{\boldsymbol{B}_{i}\boldsymbol{x}-\boldsymbol{b}_{i}}^{2}+\frac{r_{k}}{2}\Vert\boldsymbol{x}-\boldsymbol{x}^{k}\Vert^{2}\mid\boldsymbol{x}\in \mathbb{R}^{n}\right\}.\label{eq99 }
\end{equation}
\begin{equation}
\tilde{\boldsymbol{\lambda}}^{k}=\arg\max\left\{\sum_{i=1}^{m} \lambda_{i}(\Vert\boldsymbol{B}_{i}\tilde{\boldsymbol{x}}{}^{k}-\boldsymbol{b}_{i}\Vert^{2}- c_{i})+\boldsymbol{\lambda}^{T}\mathcal{D} \Phi (\tilde{\boldsymbol{x}}^{k})(\tilde{\boldsymbol{x}}^{k}-\boldsymbol{x}^{k}) -\frac{s_{k}}{2}\Vert\boldsymbol{\lambda}-\boldsymbol{\lambda}^{k}\Vert^{2}\mid \boldsymbol{\lambda}\in \mathbb{R}^{m}_{+}\right\}.\label{eq100 }
\end{equation}
Further, they can be written as 
\begin{equation}
\tilde{\boldsymbol{x}}^{k}=\left(2\boldsymbol{A}^{T}\boldsymbol{A}+2\sum_{i=1}^{m} \lambda^{k}_{i}\boldsymbol{B}^{T}_{i}\boldsymbol{B}_{i}+r_{k}\boldsymbol{I}_{n}\right)^{-1}\cdot\left(2\boldsymbol{A}^{T}\boldsymbol{a}+2\sum_{i=1}^{m} \lambda^{k}_{i}\boldsymbol{B}^{T}_{i}\boldsymbol{b}_{i}+r_{k}\boldsymbol{x}^{k}\right).\label{eq101 }
\end{equation}
\begin{equation}
\tilde{\lambda}^{k}_{i}=\min\{\hat{\lambda}^{k}_{i},0\}, \mbox{ where } \hat{\lambda}^{k}_{i}= \lambda^{k}_{i} +\frac{1}{s_{k}}\left[\Vert\boldsymbol{B}_{i}\tilde{\boldsymbol{x}}{}^{k}-\boldsymbol{b}_{i}\Vert^{2}- c_{i}+2(\boldsymbol{B}_{i}\tilde{\boldsymbol{x}}^{k}-\boldsymbol{b}_{i})^{T}\boldsymbol{B}_{i}(\tilde{\boldsymbol{x}}^{k}-\boldsymbol{x}^{k})\right], \forall i.\label{eq102 }
\end{equation}
In the above equations, we update the factor parameter $s_{k}$ using formula (\ref{eq66}). We then relax the equations as follows.
\begin{equation}\boldsymbol{w}^{k+1}=\boldsymbol{w}^{k}-\gamma(\boldsymbol{w}^{k}-\tilde{\boldsymbol{w}}^{k}).\label{eq103 }\end{equation}
\subsection{Solving (\ref{eq96}) by the PC Method }
In the prediction step, for the $k$-th iteration, the factor parameter $r_{k}$ is generated using formula (\ref{eq65}). Following this, problem (\ref{eq97}) is solved to update the following variables. 
\begin{equation}
\tilde{\boldsymbol{x}}^{k}=\arg\min\left\{\norm{\boldsymbol{A}\boldsymbol{x}-\boldsymbol{a}}^{2}+\sum_{i=1}^{m} \lambda_{i}^{k}\norm{\boldsymbol{B}_{i}\boldsymbol{x}-\boldsymbol{b}_{i}}^{2}+\frac{r_{k}}{2}\Vert\boldsymbol{x}-\boldsymbol{x}^{k}\Vert^{2}\mid\boldsymbol{x}\in \mathbb{R}^{n}\right\}.\label{eq104 }
\end{equation}
\begin{equation}
\tilde{\boldsymbol{\lambda}}^{k}=\arg\max\left\{\sum_{i=1}^{m} \lambda_{i}(\Vert\boldsymbol{B}_{i}\tilde{\boldsymbol{x}}{}^{k}-\boldsymbol{b}_{i}\Vert^{2}- c_{i})-\frac{s_{k}}{2}\Vert\boldsymbol{\lambda}-\boldsymbol{\lambda}^{k}\Vert^{2}\mid \boldsymbol{\lambda}\in \mathbb{R}^{m}_{-}\right\}.\label{eq105 }
\end{equation}
Further, they can be written as 
\begin{equation}
\tilde{\boldsymbol{x}}^{k}=\left(2\boldsymbol{A}^{T}\boldsymbol{A}+2\sum_{i=1}^{m} \lambda^{k}_{i}\boldsymbol{B}^{T}_{i}\boldsymbol{B}_{i}+r_{k}\boldsymbol{I}_{n}\right)^{-1}\cdot\left(2\boldsymbol{A}^{T}\boldsymbol{a}+2\sum_{i=1}^{m} \lambda^{k}_{i}\boldsymbol{B}^{T}_{i}\boldsymbol{b}_{i}+r_{k}\boldsymbol{x}^{k}\right).\label{eq106 }
\end{equation}
\begin{equation}
\tilde{\lambda}^{k}_{i}=\min\{\hat{\lambda}^{k}_{i},0\}, \mbox{ where } \hat{\lambda}^{k}_{i}= \lambda^{k}_{i}+\frac{1}{s_{k}}\left(\Vert\boldsymbol{B}_{i}\tilde{\boldsymbol{x}}{}^{k}-\boldsymbol{b}_{i}\Vert^{2}- c_{i}\right), \forall i.\label{eq107 }
\end{equation}
In the above equation, the factor parameter $s_{k}$ is updated by formula (\ref{eq66}). In the correction step, we correct the predictive variables.
\begin{equation}\boldsymbol{w}^{k+1}=\boldsymbol{w}^{k}-\beta\boldsymbol{M}_{k}(\boldsymbol{w}^{k}-\tilde{\boldsymbol{w}}^{k}),\label{eq108 }\end{equation}
where we have two choices for the corrective matrix, such as
\begin{equation} \boldsymbol{M}_{k}=\left(
\begin{array}{cc}
\boldsymbol{I}_{n} & 0 \\ \frac{1}{s_{k}}\mathcal{D} \Phi (\tilde{\boldsymbol{x}}^{k})&\boldsymbol{I}_{m} 
\end{array}\right) \mbox{ or } \left(
\begin{array}{cc}
\boldsymbol{I}_{n} & \frac{-1}{r_{k}}\mathcal{D} \Phi (\tilde{\boldsymbol{x}}^{k})^{T} \\0&\boldsymbol{I}_{m} 
\end{array}\right).\label{eq109 }
\end{equation}

\subsection{Numerical Results }
In the experiment, matrices and vectors are randomly generated in Matlab by using $\mathsf{rand(m,n)}$. For the constraints, the vector $\boldsymbol{z}$ is generated by using $10(1+\mathsf{rand(m,1)})$. The initial values of primal and dual variables are set to $\boldsymbol{x}^{0}=(\boldsymbol{A}^{T}\boldsymbol{A})^{-1}\boldsymbol{A}^{T}\boldsymbol{a}$ and $\boldsymbol{\lambda}^{0}=\mathsf{-ones(m,1)}$, respectively. Two factor parameters $\mu_{1}$ and $\mu_{2}$ are equal to 9 and 1.2, respectively. The stopping error is equal to $\tau =10^{-10}$. In addition, all proposed methods are implemented in a serial way, rather than a parallel way. In the following tables and figures, the numerical performance of the tested methods is evaluated by the following parameters:
\begin{itemize}
\item Iter: required iteration number;
\item Time: total computing time in seconds;
\item CPU: maximum total running time of subproblems in seconds.
\item Error: function difference between adjacent iterations. 
\end{itemize}
\begin{figure}[t]
\centering
\begin{minipage}[t]{0.5\linewidth}
\centering
\includegraphics[width=3.2in]{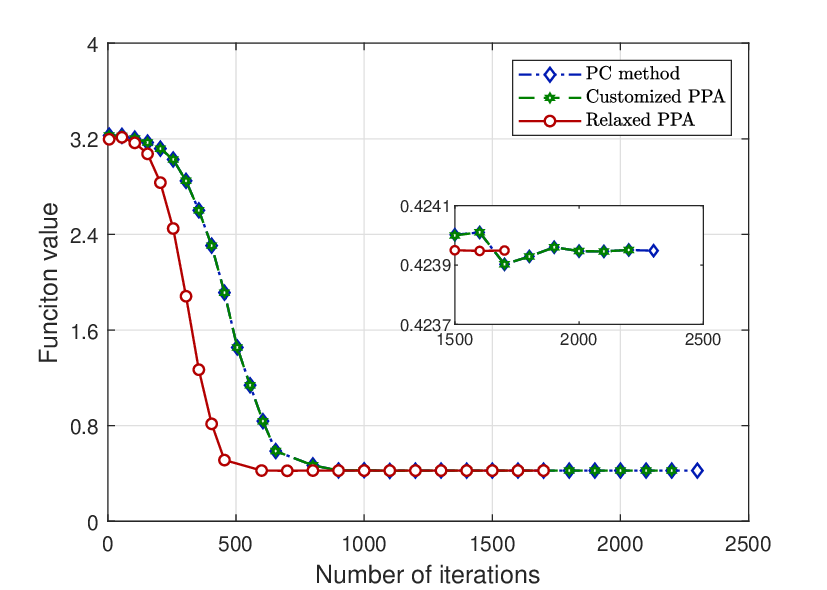}
\caption{Function value versus iterations}
  \label{fig:1}
\end{minipage}%
\begin{minipage}[t]{0.5\linewidth}
\centering
\includegraphics[width=3.2in]{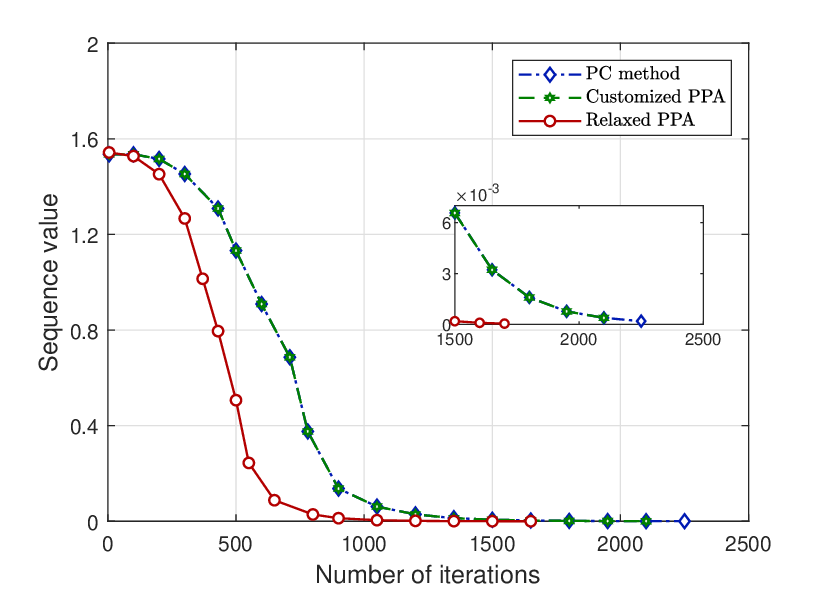}
\caption{Sequence value versus iterations}
  \label{fig:2}
\end{minipage}
\end{figure}
Our main interest is to show the convergence behavior of the proposed methods. In Figure \ref{fig:1}, we show the function value versus the number of iterations. It can be observed that the relaxed PPA requires the least number of iterations for the given stopping error. The other methods have a similar shape, whereas the PC method requires the most iterations. Additionally, we notice that the function value converges with a slight oscillation, which is due to the initial sequence being infeasible. This implies that the infeasible sequence converges to the feasible domain. Figure \ref{fig:2} illustrates the sequence value versus iterations, where the sequence value is defined as $\Vert\boldsymbol{x}^{k}-\boldsymbol{x}^{*}\Vert$. We can observe that the relaxed PPA converges to the optimal solution  and these sequences of proposed methods are strictly contracted. In Table \ref{table1}, we compare their performances by setting different dimensions. It can be observed that the relaxed PPA requires the least iterations, and it takes the longest time for the large-scale problem due to the large CPU. The performance of the customized PPA is comparable to that of the PC method. Furthermore, we can also observe that the errors of the proposed methods are smaller than the stopping error.

\setlength{\aboverulesep}{0pt}
\setlength{\belowrulesep}{0pt}
\begin{table*}[thb]\centering
\caption{The performances versus proposed methods} 
\scalebox{0.8}{
\begin{tabular}{@{}rccrrrrrrrrrrrrrr@{}}\toprule
\multicolumn{2}{c}{Dimension} & \phantom{.}& \multicolumn{4}{c}{Customized PPA $(\gamma=1.0)$} & \phantom{.} & \multicolumn{4}{c}{Relaxed PPA $(\gamma=1.5)$}& \phantom{.} & \multicolumn{4}{c}{PC Method $(\gamma=1.0)$}\\ \cmidrule{1-2} \cmidrule{4-7} \cmidrule{9-12} \cmidrule{14-17}
 $\quad m$& $n$ && Iter&Time & CPU &Error&& Iter&Time & CPU &Error&&Iter&Time & CPU &Error\\ \midrule
10 &30 && 1318& 0.99& 0.012& 4.86e-12&
      &1209&0.98& 0.011& 1.90e-11&
      &1819&1.26& 0.0104& 5.73e-11\\
10 &60 && 2318&5.74& 0.020&9.33e-11&
	   &1674&5.14& 0.018& 5.82e-12&
	   &2529&6.21&0.019&2.38e-11\\
10 &90 && 13547&65.04& 0.021&9.99e-11& 
	   &9215&137.73& 0.041& 9.98e-11&
	   &13547&66.06&0.021&9.99e-11\\
20 &30 && 1765&2.02& 0.019&4.37e-11& 
		&1452&2.85& 0.014& 9.74e-11&
		&1766&2.02&0.0108&4.25e-11\\
20 &60 && 3644&15.23& 0.027&6.33e-11&
		 &2423&19.07& 0.020& 1.41e-11&
		 &3644&14.78&0.020&6.06e-11\\
20 &90 && 20688&159.98& 0.022&9.99e-11&
		 &14068&553.09& 0.071& 9.98e-11&
		 &20688&163.47&0.041&9.99e-11\\
30 &30 && 2205&3.32& 0.013&3.76e-11&
		 & 1790&7.29& 0.016& 5.17e-11&
		 &2367&3.65&0.017&9.62e-11\\
30 &60 && 4053& 22.55& 0.024&1.92e-11& 
		&2691&29.79& 0.028& 5.67e-11&
		&3842&21.56&0.027&2.95e-12\\
30 &90 && 22175&240.86& 0.024&9.99e-11&
		 &15059&917.59& 0.127& 9.99e-11&
		 &22175&241.91&0.027&9.99e-11\\
\bottomrule
\end{tabular}}
\label{table1}
\end{table*}

\section{Conclusion}\label{section6}
Nonlinear convex problems are frequently encountered in applied mathematics and engineering applications. To tackle these problems, this paper develops PPA-like methods, namely the relaxed PPA and PC method. The convergence of these methods can be directly obtained through the unified contraction framework. The theoretical analysis reveals that the proximal matrix varies with the generated sequence. In the case of the relaxed PPA, a customized symmetrical positive-definite proximal matrix is required. Therefore, two regularization parameters need to be adjusted at each iteration. Similarly, the predictive and corrective matrices in the PC method were redesigned. In addition, we have theoretically proven that the two algorithms can achieve a convergence rate of $O(1/t)$ by constructing a custom monotonically decreasing sequence $\{r_{k},s_{k}\}$, although this sequence is not easy to construct. Numerical results confirm the validity of the theoretical analysis.

\section*{Acknowledgments}
This work was supported by the National Natural Science Foundation of China (No. 62071212) and Guangdong Basic and Applied Basic Research Foundation (No. 2019B1515130003).

\bibliographystyle{elsarticle-num}
\bibliography{ref}

\end{document}